\newcommand{\re}{{\mathbb R}}
\newcommand{\cU}{{\mathcal{U}}}
\newcommand{\bz}{{\boldsymbol{z}}}
\newcommand{\be}{{\boldsymbol{e}}}
\newcommand{\bp}{{\boldsymbol{p}}}
\newcommand{\bh}{{\boldsymbol{h}}}
\newcommand{\bc}{{\boldsymbol{c}}}
\newcommand{\bd}{{\boldsymbol{d}}}
\newcommand {\al}   {\alpha}          
      \newcommand {\del}  {\delta}          
      \newcommand {\ve}   {\varepsilon}        \newcommand {\vphi} {\varphi}
      \newcommand {\om}   {\omega}          \newcommand {\Om}  {\Omega}
      \newcommand {\pl}   {\partial}
     \newcommand {\beq}  {\begin{equation}}  \newcommand {\beqo}  {\begin{equation*}}
      \newcommand {\eeq}  {\end{equation}}   \newcommand {\eeqo}  {\end{equation*}}
\newtheorem{theorem}{Theorem}
\newtheorem{prop}{Proposition}
\newtheorem{lemma}{Lemma}
\newtheorem{cor}{Corollary}
\newtheorem{remark}{Remark}
\newtheorem{ex}{Example}
\newtheorem{prob}{Problem}
\date{}
\title{On inequalities between norms of partial derivatives\\
on convex domains}
\author{Alexander Plakhov\thanks{Center for R{\&}D in Mathematics and Applications, Department of Mathematics, University of Aveiro, Portugal and Institute for Information Transmission Problems, Moscow, Russia, plakhov@ua.pt} \and Vladimir Protasov\thanks{DISIM, University of L'Aquila, Italy}}
\begin{document}
\maketitle

\begin{abstract}

 We consider
 inequalities between $L_p$-norms of partial derivatives, $p\in [1,+\infty]$,  for bivariate concave
 functions on a convex domain that vanish on the boundary. Can the ratio between
  those norms be arbitrarily large? If not, what is the upper bound?
We show that for~$p=1$, the ratio is always bounded and find sharp
estimates, while for~$p> 1$,  the answer depends on the geometry of the domain.
\end{abstract}

\begin{quote}
{\small {\bf Mathematics subject classifications:} 26D10, 49K21, 52A10}
\end{quote}

\begin{quote}
{\small {\bf Key words:} $L_p$ norm, inequalities between derivatives, concave functions, convex domains}
\end{quote}

\begin{center}
\large{\textbf{1. Introduction}}
\end{center}
\bigskip

We consider a convex compact domain~$\Omega \subset \re^2$ and the class~$\cU_{\, \Omega}$ of continuous concave functions $u(x,y)$
on~$\Omega$  such that $u\rfloor_{\, \partial \Omega} = 0$ and the
gradient~$\nabla u(x,y) \, = \, (u_x, u_y)$
is uniformly bounded over all points~$(x,y) \, \in \, int \, \Omega$ where it is defined. Since concave continuous functions are absolutely
continuous on~$\Omega$, the gradient is defined a.e. and belongs to~$L_p(\Omega)$
for all $p\in [1, +\infty]$. For arbitrary~$f\in L_p(\Omega)$, we use the standard notation
$$
\bigl\|f\bigr\|_p \ = \
\left(\int_{\Omega}|f(x,y)|^p\, dx \, dy\right)^{1/p},\ p\in [1, +\infty); \qquad
 \bigl\|f\bigr\|_{\infty} \ = \
{\rm ess \,sup}_{(x, y) \in \Omega}|f(x,y)|.
$$
 We are interested in the value
\begin{equation}\label{eq.10-0}
K_p \ = \ K_p(\Omega) \quad  = \quad  \sup_{ \stackrel{u\in\cU_\Om}{u\ne 0}} \ \frac{\|u_x\|_p}{\|u_y\|_p} \, .
\end{equation}
It will be shown that  in the case~$p=1$ the value $K_1(\Omega)$
is always finite and can be efficiently estimated.
For~$p>1$, the value~$K_p(\Omega)$  can be infinite or finite depending on the geometry of the domain~$\Omega$. We obtain upper bounds for~$K_p$ in terms of
geometric characteristics of~$\Omega$. For~$p=\infty$, this value is found explicitly
for every domain.  The problem of computing the optimal
domains that minimize~$K_p$ under some natural assumptions is formulated.
It is interesting that for all~$p>1$, neither the Euclidean ball nor any other
domain with a smooth boundary is optimal (Theorems~\ref{th.10-p} and \ref{th.p=infty}).
On the other hand, in~$L_1$, the Euclidean ball is optimal,
along with infinitely many other optimal domains.

For~$p=2$, the problem of estimating~$K_p$ is applied to the study of the
surfaces of smallest resistance in the Newton aerodynamical problem~\cite{PP25}.
 Estimating the value~$K_p$
can also be attributed to the so-called inequalities between derivatives,
which is an important branch of the approximation theory known from the classical
results of Bernstein, Kolmogorov, Landau, Nikolsky, Calderon, Zygmund, etc. Such inequalities involve
norms of derivatives of various orders in different functional spaces.
 See~\cite{D89, HR34, M91, TM85}  for an extensive bibliography.

\begin{remark}\label{r10}
{\em Without the concavity condition on the function~$u$,
the value~$K_p$ is infinite for every convex domain~$\Omega$.
To see this it suffices to
take an arbitrary nonzero~$C^1$ function~$u_0(x,y)$ that vanishes on~$\, \partial \Omega$
and multiply it by a highly oscillating function~$\varphi(x)$, for example,
by~$\varphi_N(x) = \cos \,Nx$. Then
the ratio of norms of partial derivatives for the
function~$u_N(x,y) \, = \, u_0(x,y)\varphi_N(x)$
tends to infinity as~$N\to \infty$. }
\end{remark}

\begin{remark}\label{r20}
{\em  The value $K_p$ admits an intuitive interpretation. Suppose we are interested in measuring the average slope of a hill. The hill has a convex shape and rests on the flat base $\Omega$, hence its upper surface is the graph of a function $u \in \mathcal{U}_\Om$. Of course, it is reasonable to take $\| u \|_p$ as the average measure. Suppose, however, that we are limited in our actions and can only measure slope from north to south or from east to west, thus obtaining $\| u_x \|_p$ or $\| u_y \|_p$. The question is, how large is the discrepancy between these quantities, expressed by their ratio $K_p$?}
\end{remark}

The ratio $\frac{\| u_x \|_p}{\| u_y \|_p}$ can be made arbitrarily large or arbitrarily small by replacing $u(x,y)$ with $u(kx,y)$, where the coefficient $k$ tends to infinity or to zero. Accordingly, the domain of $u$ is uniformly stretched $k$ times along the $x$-axis. This means that the quantity $K_p(\Omega)$ can be made arbitrarily large and the quantity
\beq\label{k small}
k_p(\Omega) \quad  = \quad  \inf_{\stackrel{u\in\cU_\Om}{u\ne 0}} \ \frac{\|u_x\|_p}{\|u_y\|_p}
\eeq
can be made arbitrarily small just by stretching and compressing $\Om.$ However, the ratio $\frac{K_p(\Omega)}{k_p(\Omega)}$ is of interest. We pose the following Minimax problem:

\begin{prob}\label{pr.minimax} Find
$$
 r_p \ =\ \inf_\Om \frac{K_p(\Omega)}{k_p(\Omega)}.
$$
\end{prob}

In this paper we find a solution to this problem for $p=\infty$: \ $r_\infty = 1$, \, and a partial solution for $p=1$: \ $r_1 \le 4$. \, The solution for $1<p<\infty$ is unknown.

\begin{center}
\large{\textbf{2. The case~$\bp = 1$}}
\end{center}
\medskip

 We are going to show that~$K_{1}(\Omega) \, \le \, 2\, \frac{w_x}{w_y}$,
 where~$w_x, w_y$ are, respectively, the horizontal and vertical sides of the circumscribed
 rectangle of~$\Omega$. Moreover, this estimate  in general cannot be improved, i.e.,
it is sharp for some domains~$\Omega$. Sometimes we consider an extension
of~$\cU_{\Omega}$ to the class of arbitrary
concave functions on~$\Omega$ vanishing on~$\partial \Omega$.
In this case $u$ may be discontinuous on the boundary and its partial derivatives~$u_x, u_y$
are understood in the sense of distributions.

\begin{theorem}\label{th.5-1}
For every~$u\in \cU_{\, \Omega}$, we have
\begin{equation}\label{eq.5-1}
K_1\ = \ \sup_{u\in \cU_{\, \Omega}}\ \frac{\|u_x \|_1}{\|u_y \|_1}\quad \le \quad 2\ \frac{w_x}{w_y}\ ,
\end{equation}
where~$w_{x}, w_y$ are  the sidelengths  of the rectangle circumscribed around~$\Omega$
with the sides parallel to the~$x$ and~$y$-axes respectively.
Inequality~(\ref{eq.5-1}) becomes an equality  if and only if~$\Omega$ contains
 a vertical segment with ends on the horizontal sides of the circumscribed rectangle.
\end{theorem}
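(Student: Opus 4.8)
The plan is to reduce both $L_1$-norms to one-dimensional slice integrals and then exploit that a concave function on a segment vanishing at its endpoints is unimodal. First observe that a concave $u$ vanishing on $\partial\Om$ is nonnegative: along any chord it lies above the chord joining its (zero) boundary values. Hence on each vertical section $\{x=\text{const}\}$ the function $y\mapsto u(x,y)$ rises from $0$ to its maximum $M(x):=\max_y u(x,y)$ and then falls back to $0$, so that section has total variation $2M(x)$; by Fubini $\|u_y\|_1=2\int M(x)\,dx$. Symmetrically, writing $m(y):=\max_x u(x,y)$, horizontal sections give $\|u_x\|_1=2\int m(y)\,dy$, whence
\[
\frac{\|u_x\|_1}{\|u_y\|_1}=\frac{\int m(y)\,dy}{\int M(x)\,dx}.
\]
Here $M$ is concave on the horizontal projection $[a,b]$ of $\Om$ (partial maximisation of a concave function over a convex set is concave) with $M(a)=M(b)=0$, and likewise $m$ is concave on the vertical projection $[c,d]$ with $m(c)=m(d)=0$. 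Let $W=b-a$ and $V=d-c$ be the horizontal and vertical sides of the circumscribed rectangle, and let $\mu=\max_\Om u$ be the common maximum of $M$ and $m$.

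For the upper bound I would use two elementary facts about a concave function that vanishes at the endpoints of an interval and has maximum $\mu$: its integral never exceeds $\mu$ times the length of the interval (the trivial box bound), and it is never less than $\tfrac12\mu$ times the length (it lies above the triangular tent through its peak). Thus $\int m\le \mu V$ and $\int M\ge \tfrac12\mu W$, giving $\frac{\|u_x\|_1}{\|u_y\|_1}\le \frac{2V}{W}$, which is exactly the right-hand side $2\,w_x/w_y$ of~(\ref{eq.5-1}).

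For sufficiency of the equality condition, suppose $\Om$ contains a vertical segment $S=\{x_0\}\times[c,d]$ with ends on the two horizontal sides of the rectangle. I would test the bound on the family $u_K=\min\bigl(T(x),\,K\,\mathrm{dist}((x,y),\partial\Om)\bigr)$, where $T$ is the triangular tent over $[a,b]$ with peak $\mu$ at $x_0$. Since the distance to the boundary of a convex set is concave, $u_K$ is concave; it vanishes on $\partial\Om$ and has bounded gradient, so $u_K\in\cU_\Om$. As $K\to\infty$ the distance term becomes irrelevant in the interior, so $M_{u_K}(x)\uparrow T(x)$ and $\int M_{u_K}\to\tfrac12\mu W$; and because $S\subset\Om$ keeps $\mathrm{dist}((x_0,y),\partial\Om)>0$ for every $y\in(c,d)$, we get $u_K(x_0,y)\to\mu$, so $m_{u_K}(y)\to\mu$ pointwise on $(c,d)$ and $\int m_{u_K}\to\mu V$. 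Hence the ratio tends to $2V/W$, forcing equality.

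The delicate direction, and the main obstacle, is necessity: if $\Om$ has no such full-height chord, I must show the supremum is strictly below $2V/W$. Here the top-touching points of $\Om$ (those with $y=d$) project to a closed $x$-interval $I_t$ and the bottom-touching points to $I_b$, and absence of a full-height chord is exactly $I_t\cap I_b=\varnothing$, so these are separated by a gap $g=\mathrm{dist}(I_t,I_b)>0$. If the ratio were close to $2V/W$, then both estimates above would be nearly tight; near-tightness of $\int m\le\mu V$ forces $m$ to be close to $\mu$ for $y$ near both $c$ and $d$, but for such $y$ the section of $\Om$ shrinks toward $I_b$ and $I_t$ respectively, so the near-maximisers lie near $I_b$ and near $I_t$. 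Convexity of the superlevel set $\{u\ge(1-o(1))\mu\}$ then forces $M\ge(1-o(1))\mu$ on an $x$-interval of length at least $g-o(1)$; the resulting trapezoidal lower bound gives $\int M\ge \tfrac12\mu(W+g)-o(1)$, contradicting near-tightness of $\int M\ge\tfrac12\mu W$. I expect the genuine work to be making these $o(1)$ errors uniform over $\cU_\Om$ (a compactness or quantitative-stability argument for the concave marginals), thereby converting the contradiction into the strict bound $K_1\le \tfrac{2V}{W+g}<\tfrac{2V}{W}$.
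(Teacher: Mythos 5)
Your proof of the inequality itself is the paper's own argument in slightly different clothing: the slice identities $\|u_x\|_1=2\int m(y)\,dy$ and $\|u_y\|_1=2\int M(x)\,dx$ are Lemma~\ref{l.10-1} and Proposition~\ref{p.10-1}, and the box bound $\int m\le\mu V$ together with the tent bound $\int M\ge\frac12\mu W$ is exactly Theorem~\ref{th.10-1} (you get the tent bound from concavity of the marginal $M$, the paper from concavity of $u$ along segments to the maximum point; these are the same estimate). One remark on orientation: what this argument proves is $K_1\le 2V/W$, i.e.\ $2\cdot(\text{vertical extent})/(\text{horizontal extent})$; this is the reading of $2\,w_x/w_y$ that is consistent with Corollary~\ref{c.10-1} and with the equality condition, so your identification is the mathematically correct one, even though the theorem's wording (``sidelengths\dots parallel to the $x$- and $y$-axes'') literally suggests the reciprocal.

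On the equality characterization you genuinely depart from the paper. The paper derives it from a description of when each one-dimensional bound is attained by a single extremal function, necessarily in distributions (Theorem~\ref{th.15-1}, Corollary~\ref{c.10-1}), using the remark after Proposition~\ref{p.10-1} that such extrema are attained in the distributional closure; you stay inside $\cU_\Omega$, proving sufficiency by an explicit approximating family and necessity by quantitative stability. Two concrete points about your version. First, in sufficiency the claim $\mathrm{dist}((x_0,y),\partial\Omega)>0$ for all $y\in(c,d)$ is false in general: the full-height vertical segment may lie on $\partial\Omega$ (e.g.\ a right triangle with a vertical leg), and then your $u_K$ vanishes identically on it. The repair is immediate: for $y\in(c,d)$ the horizontal slice of $\Omega$ is a nondegenerate segment containing $x_0$ whose relative interior consists of interior points of $\Omega$, so interior points $(x',y)$ with $x'\to x_0$ still give $m_{u_K}(y)\to\mu$, and dominated convergence yields $\int m_{u_K}\to\mu V$. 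Second, the uniformity you defer as ``the genuine work'' in necessity is in fact automatic, because the only modulus needed depends on $\Omega$ alone, not on $u$: by compactness of $\Omega$, points at height within $\delta$ of $d$ (resp.\ $c$) lie horizontally within $\eta(\delta)$ of $I_t$ (resp.\ $I_b$), with $\eta(\delta)\to0$. Then for any $u$ normalized to $\mu=1$ with ratio at least $(1-\epsilon)\,2V/W$, concavity of $m$ turns $\int m\ge(1-\epsilon)V$ into $m\ge1-\sqrt{\epsilon}$ on an interval of length at least $(1-\sqrt{\epsilon})V$; the convex level set $\{u\ge1-\sqrt{\epsilon}\}$ therefore joins a near-top point to a near-bottom point, forcing $M\ge1-\sqrt{\epsilon}$ on an interval of length at least $g-2\eta(\sqrt{\epsilon}\,V)$, and your trapezoid bound gives $\int M\ge(1-\sqrt{\epsilon})(W+g-2\eta)/2$. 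Letting $\epsilon\to0$ yields $K_1<2V/W$, which is all the theorem needs (the clean constant $2V/(W+g)$ would require further optimization, but strictness does not). So your plan is complete in substance, avoids distributional extremals entirely, and as a bonus produces a gap-dependent quantitative bound in the direction of Problem~\ref{pr.10-1}, which the paper's attainment-based proof does not give.
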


This theorem immediately gives an above estimate for Minimax problem \ref{pr.minimax}. Indeed,
\beq\label{estimateM}
\frac{K_1(\Omega)}{k_1(\Omega)}\ = \ \frac{\sup_{u\in \cU_{\, \Omega}}\ \frac{\|u_x \|_1}{\|u_y \|_1}}{\inf_{u\in \cU_{\, \Omega}}\ \frac{\|u_x \|_1}{\|u_y \|_1}}  \
=\ \sup_{u\in \cU_{\, \Omega}}\ \frac{\|u_x \|_1}{\|u_y \|_1} \, \sup_{u\in \cU_{\, \Omega}}\ \frac{\|u_y \|_1}{\|u_x \|_1}\
\le \ 2\ \frac{w_x}{w_y}\ 2\ \frac{w_y}{w_x}\ =\ 4.
\eeq
It follows that $r_1 \le 4.$

This theorem will be obtained as  a corollary of Theorems~\ref{th.10-1}
and~Theorem~\ref{th.20-1} proved below. Then in Theorem~\ref{th.30-1}
we estimate the ratio between the maximal and minimal $L_1$-norms
of directional derivatives~$\frac{\|u_{\bh_1}\|_1}{\|u_{\bh_2}\|_1}$ over all possible pairs of
unit vectors~$\bh_1, \bh_2$.

We begin with  auxiliary  results.
For arbitrary $t\in \re$, we denote by $\bigl[a(t), b(t)\bigr]$
the segment of intersection of~$\Omega$ with the horizontal line  $y=t$ and
$m(t)\, = \, \max\limits_{x\in [a,b]}u(x,t)$ on this segment.
If this line does not intersect~$\Omega$, then the segment is empty and $m=0$.
\medskip

\begin{lemma}\label{l.10-1}
 For every~$y$, we have~$\int_{a}^{b}\,
\bigl| u_x(x,y)\bigr|\, dx\, = 2m$, where $a,b, m$ are  $a(y),b(y)$ and $m(y)$
respectively.
\end{lemma}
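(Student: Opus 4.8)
The plan is to reduce everything to a one-dimensional statement about concave functions on an interval. Fix~$y$ and set $g(x) = u(x,y)$ for $x \in [a,b]$, where $a = a(y)$ and $b = b(y)$. Since $u$ is concave on the convex set~$\Om$ and the horizontal segment $\{(x,y) : a \le x \le b\}$ lies in~$\Om$, the restriction $g$ is a concave function of the single variable~$x$ on $[a,b]$. Moreover the endpoints $(a,y)$ and $(b,y)$ belong to~$\partial\Om$, so $g(a) = g(b) = 0$, and by the definition of~$m$ we have $\max_{x\in[a,b]} g(x) = m$. Thus the lemma becomes the claim that a concave function on $[a,b]$ vanishing at both endpoints satisfies $\int_a^b |g'(x)|\,dx = 2m$.

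The key structural fact I would use is that such a~$g$ is unimodal. Indeed, a concave function has a non-increasing derivative, and an elementary convexity argument shows that it is non-decreasing up to any point where its maximum~$m$ is attained and non-increasing afterwards; fix such a point~$x^*$. Consequently $g'(x) \ge 0$ for a.e.\ $x \in [a, x^*]$ and $g'(x) \le 0$ for a.e.\ $x \in [x^*, b]$, so $|g'|$ splits cleanly into two monotone pieces. I would then compute the integral by splitting at~$x^*$ and applying the fundamental theorem of calculus on each piece:
\[
\int_a^b |g'(x)|\,dx = \int_a^{x^*} g'(x)\,dx - \int_{x^*}^{b} g'(x)\,dx = \bigl(g(x^*)-g(a)\bigr) - \bigl(g(b)-g(x^*)\bigr) = 2m,
\]
using $g(a)=g(b)=0$ and $g(x^*)=m$.

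Finally I must check that $u_x(x,y)$, the a.e.-defined partial derivative, coincides with~$g'(x)$ along the slice, and that $g$ is absolutely continuous so that the fundamental theorem of calculus is legitimate up to the endpoints. The main obstacle is exactly this regularity at the boundary: a priori $g'$ could blow up near~$a$ or~$b$. For $u \in \cU_{\,\Om}$ this is harmless, since the gradient is assumed uniformly bounded, so $g$ is Lipschitz and hence absolutely continuous. More robustly, the monotonicity of~$g$ on each of the two subintervals already forces $\int_a^b |g'|$ to equal the total variation of~$g$, namely $2m$, even when $g'$ is only locally integrable; this is what lets the identity persist in the distributional extension of~$\cU_{\,\Om}$ discussed above.
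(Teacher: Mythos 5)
Your proof is correct and follows essentially the same route as the paper's: restrict to the horizontal slice, use concavity to get the sign of the derivative on either side of a maximum point, split the integral there, and apply the fundamental theorem of calculus to obtain $m+m=2m$. The only difference is your explicit attention to absolute continuity and to the identification of $u_x$ with the slice derivative, which the paper leaves implicit; this is a harmless (and reasonable) elaboration rather than a different argument.
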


\noindent {\tt Proof}. Since for every~$y$, the univariate function
$u(\cdot \, , \, y)$   is concave,
its derivative is non-increasing. Hence,
$u_x(x,y) \ge 0$  for~$x<\tau$ and $u_x(x,y) \le 0$ for~$x>\tau$, where
$x=\tau$ is a point of maximum. Therefore,
$\int_{a}^{\tau}|u_x(x,y)|\, dx \, = \, \int_{a}^{\tau}u_x(x,y)\, dx\, = \,
u(\tau, y) - u(a, y)= m - 0 = m$.
Similarly, $\int_{\tau}^{b}|u_x(x,y)|\, dx \, = \, \int_{\tau}^{b}- u_x (x,y)\, dx\, = \,
u(\tau, y) - u(b, y)= m - 0 = m$. Hence, $\int_{a}^{b}\,
| u_x(x,y)|\, dx\, = \, m+m \, = 2m$.

{\hfill $\Box$}
\smallskip

Lemma~\ref{l.10-1} immediately implies
\smallskip

\begin{prop}\label{p.10-1}  Let a segment $[c,d]$ of the axis $OY$ be the projection of the set~$\Omega$. Then  $\|u_x\|_1 \, = \, \int_{c}^d \, 2m(y)\, dy$.
\end{prop}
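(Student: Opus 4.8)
The plan is to reduce the double integral defining $\|u_x\|_1$ to an iterated integral and then apply Lemma~\ref{l.10-1} slice by slice. Since $\Omega$ is convex and its projection onto the $y$-axis is the segment~$[c,d]$, for every~$y\in[c,d]$ the horizontal section of~$\Omega$ by the line through~$y$ is exactly the segment~$[a(y),b(y)]$, while for~$y\notin[c,d]$ this section is empty. Thus, up to a set of planar measure zero,
\[
\Omega \ = \ \bigl\{(x,y):\ y\in[c,d],\ a(y)\le x\le b(y)\bigr\}.
\]

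First I would invoke Fubini's theorem. Because~$u\in\cU_{\, \Omega}$, the gradient~$\nabla u$ is uniformly bounded where defined and~$\Omega$ is compact, so~$u_x\in L_1(\Omega)$ and~$|u_x|$ is integrable over~$\Omega$. Fubini's theorem then permits writing
\[
\|u_x\|_1 \ = \ \int_{\Omega}|u_x(x,y)|\,dx\,dy \ = \ \int_{c}^{d}\left(\int_{a(y)}^{b(y)}|u_x(x,y)|\,dx\right)dy.
\]
Second, I would apply Lemma~\ref{l.10-1} to the inner integral: for each fixed~$y$ it equals~$2m(y)$. Substituting this in yields
\[
\|u_x\|_1 \ = \ \int_{c}^{d} 2m(y)\,dy,
\]
which is precisely the assertion. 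For~$y\notin[c,d]$ the section is empty and~$m(y)=0$, so those values contribute nothing and need not be included in the range of integration.

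The only points requiring care, and hence the main (minor) obstacle, are measurability and integrability. One must verify that~$y\mapsto a(y),\,b(y),\,m(y)$ are measurable so that the iterated integral is well defined: this follows from the convexity of~$\Omega$, which makes~$a(y)$ and~$b(y)$ semicontinuous boundary functions, and from the continuity of~$u$, which makes~$m(y)$ continuous on the interior of~$[c,d]$. Integrability of~$|u_x|$ over~$\Omega$ is guaranteed by the uniform boundedness of~$\nabla u$ together with the finiteness of the area of~$\Omega$. Once these verifications are in place, the proposition is an immediate consequence of Lemma~\ref{l.10-1}, as claimed.
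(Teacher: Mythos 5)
Your proof is correct and follows exactly the route the paper intends: the paper states that Proposition~\ref{p.10-1} follows immediately from Lemma~\ref{l.10-1}, which amounts to the Fubini reduction to horizontal slices that you carry out explicitly. Your additional verification of measurability of $a(y), b(y), m(y)$ and integrability of $|u_x|$ is routine bookkeeping that the paper omits as immediate, so there is no substantive difference between the two arguments.
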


\smallskip

 Proposition~\ref{p.10-1} guarantees, in particular, that the supremum and infimum  of the value
 $\|u_x\|_1$
 over a bounded set of functions~$u$ are always attained in distributions.
Let us denote by~$w = d-c$ the length of the projection of~$\Omega$
to $OY$. Every point~$\bc \in \Omega$ with the second coordinate equal to~$c$
is called~{\em lower}, a point~$\bd \in \Omega$ with the second coordinate equal to~$d$
is~{\em upper}.
\smallskip

Now we are ready to formulate the main result.
\begin{theorem}\label{th.10-1}
 For every~$u \in \cU_{\, \Omega}$, we have;
\begin{equation}\label{eq.10-1}
wM \ \le \ \|u_x\|_1 \ \le \ 2wM\, ,
\end{equation}
where~$\max\limits_{(x,y)\in \Omega}u(x,y) = M$.
\end{theorem}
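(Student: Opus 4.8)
The plan is to reduce everything to the slice formula of Proposition~\ref{p.10-1}, namely $\|u_x\|_1 = 2\int_c^d m(y)\,dy$, so that~(\ref{eq.10-1}) becomes equivalent to the purely one-dimensional estimate $\frac{1}{2}\,wM \le \int_c^d m(y)\,dy \le wM$. Here $w = d-c$ and $M = \max_\Omega u = \max_{y} m(y)$, the last equality holding because $\max_y m(y) = \max_y \max_x u(x,y) = \max_{(x,y)\in\Omega} u$. The upper bound is then immediate: since $M$ is the global maximum, $m(y) = \max_x u(x,y) \le M$ for every $y \in [c,d]$, whence $\int_c^d m(y)\,dy \le (d-c)M = wM$ and therefore $\|u_x\|_1 \le 2wM$.

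For the lower bound I would first record two structural facts about the profile $m$. First, $u \ge 0$ on $\Omega$: any point of $\Omega$ lies on a chord whose endpoints are on $\partial\Omega$, where $u$ vanishes, so concavity of $u$ forces $u \ge 0$; consequently $m \ge 0$ on $[c,d]$. Second, $m$ is concave on $[c,d]$. Indeed, given $y_1, y_2 \in [c,d]$ and $\lambda \in [0,1]$, choose near-optimal abscissae $x_1, x_2$ with $(x_i,y_i) \in \Omega$ and $u(x_i,y_i) \ge m(y_i) - \eps$; then the point $\lambda(x_1,y_1) + (1-\lambda)(x_2,y_2)$ lies in $\Omega$ by convexity of the domain, and concavity of $u$ gives $m(\lambda y_1 + (1-\lambda)y_2) \ge \lambda m(y_1) + (1-\lambda)m(y_2) - \eps$, after which letting $\eps \to 0$ yields concavity of $m$.

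With these in hand, pick $y^\ast \in [c,d]$ with $m(y^\ast) = M$ (such a point exists since the global maximum of $u$ sits on some horizontal slice). By concavity the graph of $m$ lies above the chord joining $(c, m(c))$ to $(y^\ast, M)$ on $[c, y^\ast]$ and above the chord joining $(y^\ast, M)$ to $(d, m(d))$ on $[y^\ast, d]$. Integrating these two linear lower bounds and using $m(c), m(d) \ge 0$ gives
\[
\int_c^d m(y)\,dy \ \ge \ \frac{(y^\ast - c)\bigl(m(c)+M\bigr)}{2} + \frac{(d - y^\ast)\bigl(M+m(d)\bigr)}{2} \ \ge \ \frac{(d-c)M}{2} \ = \ \frac{wM}{2},
\]
so that $\|u_x\|_1 = 2\int_c^d m(y)\,dy \ge wM$, completing~(\ref{eq.10-1}).

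The only genuinely non-routine ingredients are the concavity of the profile $m$ and the nonnegativity of $u$, and both are standard consequences of convexity, so I do not expect a serious obstacle. The one point requiring mild care is that these arguments read most cleanly in the extended (distributional) setting where the maxima defining $m(y)$ are attained on each slice; this is precisely the class fixed before the statement, and Proposition~\ref{p.10-1} already guarantees that the relevant suprema are realized there.
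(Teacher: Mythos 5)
Your proof is correct and follows essentially the same route as the paper: both reduce to the slice formula of Proposition~\ref{p.10-1}, obtain the upper bound from $m(y)\le M$, and obtain the lower bound by integrating a linear minorant of $m$ on the two subintervals meeting at the height of the maximizer. The only (inessential) difference is how that minorant is produced: you first prove that the profile $m$ is concave and nonnegative and then integrate its chords, whereas the paper gets $m \ge tM$ directly from concavity of $u$ along the two segments joining the point of maximum to a lower and an upper point of $\Omega$.
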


\noindent {\tt Proof}. Since for every $y\in [c,d]$, we have $m(y) \le M$,
Proposition~1 implies that $\|u_x\|_1 \le \int_{c}^d\, M \, dx \, = \, 2(d-c)M$, which proves the upper bound.  To establish the lower bound, we apply
the concavity of~$u$: let $\bz \in \Om$ be a point of maximum of $u$; then~$\ u\bigl((1-t)\bc + t\bz\bigr) \, \ge \,
(1-t)u(\bc) \,  + \, t\, u(\bz)\, = \, tM$ for every~$t\in [0,1]$.
Hence, the maximum of~$u$ on the horizontal line passing through the point
$(1-t)\bc + t\bz$ is at least~$tM$.
The same is true for every point~$(1-t)\bd + t\bz$ of the segment~$[\bz, \bd]$.
Now Proposition~1 yields~$\|u_x\|_1 \, = \, \int_{c}^d \, 2m(x)\, dx\, = \,
\int_{c}^z \, 2m(y)\, dy\, + \, \int_{z}^d \, 2m(y)\, dx\, \ge \,
2(z-c)\int_{0}^1 \, tM\, dt\, + \, 2(d-z)\int_{0}^1 \, tM\, dt\,
= \,  (d-c)M$.

{\hfill $\Box$}
\smallskip

Both bounds for the value~$\|u_x\|_1$ in~(\ref{eq.10-1}) are sharp as the following
theorem states:
\begin{theorem}\label{th.15-1}
The lower bound in~(\ref{eq.10-1}) is attained if and only if there exists a point of maximum~$\bz \in \Omega$ of~$u$  and lower and upper points~$\bc$ and~$\bd$ of~$\Omega$ respectively
such that the function $u$ is linear on each of the two segments~$[\bz, \bc]$ and $[\bz, \bd]$
and every point of those segments is a point of maximum for the restriction of~$u$
to the horizontal  line passing through it.

The upper bound can be attained only in distributions.
It occurs if and only if there are
lower and  upper points of~$\Omega$
such that an open segment between them consists of points of maximum of~$u$.
\end{theorem}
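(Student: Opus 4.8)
The plan is to prove both the lower-bound and upper-bound sharpness statements in Theorem~\ref{th.15-1} by analyzing when the two inequalities in Theorem~\ref{th.10-1} become equalities, using Proposition~\ref{p.10-1} as the central tool. Recall that Proposition~\ref{p.10-1} gives the exact formula $\|u_x\|_1 = \int_c^d 2m(y)\, dy$, so both bounds are really statements about the function $m(y)$, the maximal height of $u$ along each horizontal slice.

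\medskip
\noindent\textbf{Lower bound.}
First I would revisit the proof of the lower bound in Theorem~\ref{th.10-1}. There the estimate was obtained by two ingredients: (i) along the segment $[\bc,\bz]$ (and similarly $[\bz,\bd]$) concavity gives $u\bigl((1-t)\bc+t\bz\bigr)\ge tM$, and (ii) the maximum $m(y)$ over a whole horizontal slice is at least the value of $u$ at the single point $(1-t)\bc+t\bz$ lying on that slice. The chain of inequalities collapses to the value $(d-c)M$ exactly when \emph{every} inequality used is an equality. Equality in (i) for all $t$ forces $u$ to be \emph{linear} on $[\bc,\bz]$ (and on $[\bz,\bd]$), giving $u$ the value $tM$ there; equality in (ii) forces the point on the segment to realize the slice-maximum, i.e. every point of $[\bc,\bz]\cup[\bz,\bd]$ must be a point of maximum for $u$ restricted to its horizontal line. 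I would package these two conditions precisely as the statement of the theorem and verify the converse: if $u$ is linear on the two segments and those points are slice-maxima, then $m(y)=tM$ on the corresponding slices, the integral computes to $(d-c)M$, and the lower bound is attained. The main care is to confirm that linearity plus the slice-maximum property is \emph{exactly} what the equality analysis yields, with no extra hidden conditions.

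\medskip
\noindent\textbf{Upper bound.}
For the upper bound, the proof of Theorem~\ref{th.10-1} used only $m(y)\le M$ for every $y\in[c,d]$, so equality $\|u_x\|_1=2wM$ requires $m(y)=M$ for \emph{almost every} $y\in[c,d]$. I would argue that for a continuous concave $u\in\cU_\Omega$ this is impossible unless $u$ is degenerate: by concavity the set where the slice-maximum equals $M$ and the geometry near the lower and upper boundary points $\bc,\bd$ force $m(c)=m(d)=0$ (since $u$ vanishes on $\partial\Omega$, the extreme slices $y=c$ and $y=d$ collapse to boundary points), so $m$ cannot equal the constant $M$ on a full-measure set while remaining continuous and bounded by $M$. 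This is why the upper bound is attainable \emph{only in distributions}: I would pass to the extension of $\cU_\Omega$ to arbitrary concave functions on $\Omega$ vanishing on $\partial\Omega$, as the paper sets up, where $u$ may jump at the boundary. The attaining configuration is then an open vertical segment joining a lower point $\bc$ to an upper point $\bd$ all of whose interior points are points of maximum of $u$: along this segment $m(y)=M$ (constant) for all interior $y$, contributing $\int_c^d 2M\,dy=2wM$, and one checks via Proposition~\ref{p.10-1} that no other configuration can push $\int_c^d 2m(y)\,dy$ up to $2wM$.

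\medskip
\noindent The hard part will be the \emph{only if} direction for the upper bound: showing rigorously that $m(y)=M$ a.e. is impossible for genuinely continuous functions and pinning down that the distributional attainment requires an entire open segment of maxima running from a lower to an upper point (as opposed to, say, a maximum attained on a set of positive area). Here I would exploit concavity: the super-level set $\{u=M\}$ is convex, and if it met a.e. horizontal slice it would have to be essentially a vertical segment spanning the full projection $[c,d]$; combined with the boundary condition $u\rfloor_{\partial\Omega}=0$, this pins the endpoints to be a lower and an upper point of $\Omega$, and the continuity-versus-distribution dichotomy explains why the value $M$ can be maintained only by allowing a boundary discontinuity. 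The lower-bound equivalence, by contrast, should follow fairly directly from tracking equality in the two-step estimate above.
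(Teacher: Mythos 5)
Your proposal is correct and follows essentially the same route as the paper's proof: for the lower bound you track equality through the two inequalities used in the proof of Theorem~\ref{th.10-1} (concavity along $[\bc,\bz]$ and $[\bz,\bd]$, plus the slice-maximum bound), and for the upper bound you reduce via Proposition~\ref{p.10-1} to $m(y)=M$ a.e., rule this out for continuous $u$ because $m$ vanishes at the extreme slices, and use convexity of the set of maxima $S$ to extract an open segment of maxima joining a lower and an upper point, exactly as the paper does. One small correction: that segment need not be \emph{vertical} (only the $y$-coordinates of its endpoints are forced to equal $c$ and $d$, so it may be slanted), so the word ``vertical'' should be dropped from your description of the attaining configuration; your closing formulation (``an open segment of maxima running from a lower to an upper point'') is the accurate one.
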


\smallskip

\noindent {\tt Proof}.
If~$\|u_x\|_1 =  2wM$, then every horizontal cross-section of~$\Omega$
intersects the set~$S$ of points of maxima of~$u$,  which is impossible
for cross-sections close to the lower and upper ones.
If~$u$ is discontinuous, then
this equality holds in distributions
precisely when all horizontal cross sections, except of the extreme ones, intersect~$S$.
Since~$S$ is convex, this means that it contains an open interval connecting some lower and upper points of~$\Omega$.

From the proof of the lower bound in~(\ref{eq.10-1}) it
follows  that~$\|u_x\|_1 =  wM$ if and only
if~$\ u\bigl((1-t)\bc + t\bz\bigr) \, = \,
(1-t)u(\bc) \,  + \, t\, u(\bz)\, = \, tM$ for every~$t\in [0,1]$, i.e.,~$u$ is linear  on the segment~$[\bz, \bc]$.
Furthermore,
the maximum of~$u$ on the horizontal line passing through the point
$(1-t)\bc + t\bz$ must be equal to~$tM$, i.e., must be achieved
at this point. This completes the proof for the segment~$[\bz, \bc]$.
For the segment~$[\bz, \bd]$,  the proof is the same.

{\hfill $\Box$}
\smallskip

\begin{remark}\label{r.10-1}
{\em Theorem~1 implies that for every convex domain~$\Omega$,  the lower
bound~in~(\ref{eq.10-1}) is achieved on a certain admissible function~$u$
and the upper bound is achieved in distributions on another function.
A question arises can they be achieved on the same function but for different
directional derivatives?  To attack this problem we need to introduce some further notation. }
\end{remark}

The intersection of $\Omega$ with its line of support is called an {\em edge} of~$\Omega$.
One-point edges a precisely the extreme points of~$\Omega$. We say that a one-point edge
is {\em parallel} to a given line if it is an intersection of~$\Omega$ with a line of support parallel to this line.
For an arbitrary unit vector~$\bh$, we consider the
directional derivative~$u_{\bh} = (\nabla u, \bh)$.
We call {\em the width of $\Omega$ parallel to~$\bh$}
a pair of lines of support parallel to~$\bh$ and also the distance between them.

\smallskip

\begin{cor}\label{c.10-1}
Let~$\bh_1, \bh_2$ be arbitrary unit vectors and
$w_i$ be the width parallel to~$\bh_i$;  then
\begin{equation}\label{eq.20-1}
\frac{\|u_{\bh_1}\|_1}{\|u_{\bh_2}\|_1}\quad \le \quad 2\ \frac{w_1}{w_2} \, .
\end{equation}
The equality can be achieved only in the sense of distributions.
This occurs precisely when~$u$ is constant on a line
connecting two points~$\bc, \bd$ on the opposite edges parallel to~$\bh_1$,
 linear on each half-plane  about the line~$\bc\bd$ and vanishing
 on the lines of support parallel to~$\bh_2$. In this case~$\bh_2$ is parallel to~$\bc\bd$.
\end{cor}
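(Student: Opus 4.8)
The plan is to reduce the corollary to the directional case of the results already established for $u_x$, by the observation that Lemma~\ref{l.10-1}, Proposition~\ref{p.10-1}, Theorem~\ref{th.10-1} and Theorem~\ref{th.15-1} are all rotation-invariant: their proofs use only the concavity of $u$ along the lines in the direction of differentiation and the vanishing of $u$ on $\partial\Om$, never the specific orientation of the axes. Thus for any unit vector $\bh$, choosing orthonormal coordinates with first axis along $\bh$ turns $u_{\bh}$ into the partial derivative along that axis, and the role of the extent $w=d-c$ in Theorem~\ref{th.10-1} is played by the extent of $\Om$ orthogonal to $\bh$, which is exactly the width $w_{\bh}$ parallel to $\bh$. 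Hence Theorem~\ref{th.10-1} reads $w_{\bh}M\le\|u_{\bh}\|_1\le 2w_{\bh}M$ in every direction, with the \emph{same} constant $M=\max_{\Om}u$.

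First I would prove the inequality. Applying the upper bound of Theorem~\ref{th.10-1} in the frame adapted to $\bh_1$ and its lower bound in the (in general different) frame adapted to $\bh_2$, I get $\|u_{\bh_1}\|_1\le 2w_1M$ and $\|u_{\bh_2}\|_1\ge w_2M$. Dividing, the factor $M$ — which is frame-independent — cancels and gives (\ref{eq.20-1}). I stress that $\bh_1,\bh_2$ need not be orthogonal: the argument uses two independent coordinate systems linked only through the common scalar $M$.

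Next I would treat the equality case. Equality in (\ref{eq.20-1}) forces both one-sided bounds to be sharp simultaneously for the same $u$, i.e. $\|u_{\bh_1}\|_1=2w_1M$ and $\|u_{\bh_2}\|_1=w_2M$, so I would invoke Theorem~\ref{th.15-1} in each frame. Its distributional upper-bound case gives an open segment $L$ of points of maxima joining the two opposite edges parallel to $\bh_1$, so $u\equiv M$ on $L$ and every line parallel to $\bh_1$ meets $L$; its lower-bound case gives the linear \emph{cone} in the $\bh_2$-frame, forcing the maximal value of $u$ on the lines parallel to $\bh_2$ to realise a \emph{tent} profile, descending linearly to $0$ on the two support lines parallel to $\bh_2$. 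Reconciling these two descriptions is the heart of the matter and the only place where I expect real difficulty.

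The reconciliation I have in mind is this. Writing $\|u_{\bh_2}\|_1=\int 2m_2\,d\eta$ as in Proposition~\ref{p.10-1} in the $\bh_2$-frame, where $m_2(\eta)$ is the maximum of $u$ on the line parallel to $\bh_2$ at signed distance $\eta$, sharpness of the lower bound requires $m_2$ to be the tent profile, which attains its peak $M$ at a \emph{single} value of $\eta$; since $u\equiv M$ on all of $L$, the segment $L$ must therefore project to a single point on the axis orthogonal to $\bh_2$, i.e. $L\parallel\bh_2$. (Were $L$ not parallel to $\bh_2$, its projection would be a nondegenerate interval, $m_2$ would equal $M$ on a plateau rather than at one point, and $\int 2m_2\,d\eta$ would strictly exceed $w_2M$, contradicting sharpness.) Once $L\parallel\bh_2$ is established, the cone structure says exactly that $u$ descends affinely from $M$ on $L$ to $0$ on each of the two support lines parallel to $\bh_2$, i.e. $u$ is linear on each half-plane bounded by the line through $L$ and vanishes on those support lines, the asserted geometry. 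For sufficiency I would run the computation backwards: for such $u$ every line parallel to $\bh_1$ meets $L$, whence $m_1\equiv M$ and $\|u_{\bh_1}\|_1=2w_1M$ by Proposition~\ref{p.10-1}, while the tent profile gives $\|u_{\bh_2}\|_1=w_2M$, so (\ref{eq.20-1}) holds with equality. Finally, the equality is attainable only in the sense of distributions because $u\equiv M$ on $L$ up to its endpoints $\bc,\bd$ on the $\bh_1$-edges, where $u$ must vanish, forcing a jump; this jump is precisely the singular part of $u_{\bh_2}$ that supplies the value $w_2M$.
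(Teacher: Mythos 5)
Your reduction and the inequality itself are correct, and they follow the route the paper intends: Theorem~\ref{th.10-1} is rotation-invariant, so $\|u_{\bh_1}\|_1\le 2w_1M$ and $\|u_{\bh_2}\|_1\ge w_2M$ with the same $M$, and division gives~(\ref{eq.20-1}). Your necessity analysis is also right up to and including the claim $L\parallel\bh_2$: equality does force both one-sided bounds to be sharp for the same $u$, and if the segment $L$ of maxima were not parallel to $\bh_2$, the section-wise maximum $m_2$ would equal $M$ on a nondegenerate interval and $\int 2m_2\,d\eta$ would strictly exceed $w_2M$. The sufficiency computation is fine as well.

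The gap is exactly the step you call the heart of the matter: ``once $L\parallel\bh_2$ is established, the cone structure says exactly that $u$ descends affinely from $M$ on $L$ to $0$ on each of the two support lines parallel to $\bh_2$, i.e.\ $u$ is linear on each half-plane.'' This is a non sequitur. Theorem~\ref{th.15-1} (lower-bound case, read in the $\bh_2$-frame) gives linearity of $u$ only along \emph{two segments} joining one maximum point to the two support lines, plus the tent profile of $m_2$; combined with concavity and $u\ge 0$, this forces $u$ to agree with the affine ``tent plane'' only on the convex hull of $\bar L$ and the edge of $\Om$ lying on each support line parallel to $\bh_2$ --- not on the whole half-domain. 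When $\Om$ is strictly larger than the union of these two hulls, the implication genuinely fails. Concretely, let $\Om$ be the unit disc, $\bh_1=\be_1$, $\bh_2=\be_2$, and for small $\eps>0$ set $u(x,y)=\min\bigl(1-|x|,\ (1+x)+\eps\bigl(x+y+\tfrac65\bigr)\bigr)$ in the interior, $u=0$ on $\pl\Om$. This $u$ is concave and positive inside; $u\equiv 1$ on the vertical diameter, so every horizontal chord has maximum $1$ and Proposition~\ref{p.10-1} gives $\|u_x\|_1=4=2w_1M$; the extra affine piece cuts below $1-|x|$ only in the cap $x+y<-\tfrac65$, which misses the upper endpoint of every vertical chord, so every vertical chord still has maximum $1-|x|$ and $\|u_y\|_1=2=w_2M$. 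Thus equality holds in~(\ref{eq.20-1}), yet $u$ is not linear on the left half-disc and is not $1-|x|$. So your step cannot be repaired by a better argument: the equality conditions do not imply the stated global structure for a general $\Om$; the ``only if'' reading of the characterization is accurate only for domains that coincide with the union of the two hulls (e.g.\ a parallelogram with two sides parallel to $\bh_2$), and the same example contradicts the uniqueness claim of Example~\ref{ex.10-1}. Any complete treatment of the equality case has to either weaken the conclusion (linearity on the two hulls only) or restrict the class of domains.
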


From Corollary \ref{c.10-1} follows the following result generalizing the estimate \eqref{estimateM} for Minimax problem \ref{pr.minimax},
$$
\frac{\sup_{u} \frac{\|u_{\bh_1}\|_1}{\|u_{\bh_2}\|_1}}{\inf_{u} \frac{\|u_{\bh_1}\|_1}{\|u_{\bh_2}\|_1}}
\ =\ \sup_{u} \frac{\|u_{\bh_1}\|_1}{\|u_{\bh_2}\|_1}\ \sup_{u} \frac{\|u_{\bh_2}\|_1}{\|u_{\bh_1}\|_1}\ \le \
2\ \frac{w_1}{w_2} \ 2\ \frac{w_1}{w_2} \ =\ 4.
$$

\smallskip

\noindent\textbf{Proof of Theorem~\ref{th.5-1}}. We apply Corollary~\ref{c.10-1} to
the unit coordinate vectors~$\be_1, \be_2$.

{\hfill $\Box$}
\smallskip

\smallskip

Corollary~\ref{c.10-1} gives the geometric conditions on the domain~$\Omega$ and directions~$\bh_1, \bh_2$ under which the inequality
\begin{equation}\label{eq.30-1}
\sup_{u \in \cU_{\, \Omega}}\, \frac{\|u_{\bh_1}\|_1}{\|u_{\bh_2}\|_1}\quad  \le \quad  2\ \frac{w_1}{w_2} \, .
\end{equation}
becomes an equality.

\smallskip

\begin{cor}\label{c.20-1}  For a given convex domain~$\Omega$ and
non-collinear unit vectors~$\bh_1, \bh_2$, the inequality~(\ref{eq.30-1}) becomes an equality
if and only if
there is a straight line parallel to~$\bh_2$ intersecting two
edges of~$\Omega$ parallel to~$\bh_1$.
\end{cor}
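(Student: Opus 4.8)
The plan is to read Corollary~\ref{c.20-1} as the purely geometric translation of the analytic equality condition already supplied by Corollary~\ref{c.10-1}, and to prove the two implications separately.

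\emph{Necessity.} Suppose (\ref{eq.30-1}) is an equality. Passing to the extended class of concave distributional functions — where the estimate (\ref{eq.30-1}) still holds and, by Proposition~\ref{p.10-1}, the supremum is attained after normalising $\max u = M$ — the equality assumption forces an extremal $u$ with $\|u_{\bh_1}\|_1/\|u_{\bh_2}\|_1 = 2w_1/w_2$. Corollary~\ref{c.10-1} describes this $u$ completely: it is constant on a segment $\bc\bd$ joining the two opposite edges of $\Omega$ parallel to $\bh_1$, and $\bc\bd$ is parallel to $\bh_2$. The line through $\bc$ and $\bd$ is therefore a line parallel to $\bh_2$ meeting two edges of $\Omega$ parallel to $\bh_1$, which is exactly the claimed configuration.

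\emph{Sufficiency.} Conversely, suppose a line $\ell\parallel\bh_2$ meets two edges $E_1,E_2$ parallel to $\bh_1$. A planar convex body has exactly two support lines parallel to $\bh_1$, so $E_1,E_2$ sit on these two opposite lines, a distance $w_1$ apart; setting $\bc=\ell\cap E_1$ and $\bd=\ell\cap E_2$ we get (because $\bh_1,\bh_2$ are non-collinear, so $\bc\ne\bd$) a genuine chord $\bc\bd\subseteq\Omega$ that is parallel to $\bh_2$ and crosses the whole width $w_1$. I would then exhibit the extremal function directly as the ``tent'' $u=M\,\phi(\langle\cdot,\bn\rangle)$, where $\bn$ is normal to $\bh_2$ and $\phi$ rises linearly to its maximum on $\ell$ and falls linearly to $0$ on the two support lines parallel to $\bh_2$. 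This $u$ is concave (a concave function of a linear argument), vanishes on the support lines parallel to $\bh_2$, is constant on $\bc\bd$, and is linear on each side of the line $\bc\bd$ — precisely the profile prescribed by Corollary~\ref{c.10-1} — so it realises the ratio $2w_1/w_2$, and smoothing its boundary jumps by steep continuous ramps shows the supremum over $\cU_{\,\Omega}$ equals $2w_1/w_2$.

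The remaining bookkeeping is to check that this single $u$ saturates both one-sided estimates at once: its set of maxima contains the chord $\bc\bd$, which spans the full $\bh_1$-width and hence meets every line parallel to $\bh_1$, forcing $\|u_{\bh_1}\|_1=2w_1M$ through the sharpness criterion (the analogue of Theorem~\ref{th.15-1}); while the tent profile down to the support lines parallel to $\bh_2$ forces $\|u_{\bh_2}\|_1=w_2M$. The point I expect to require the most care is the interface between the continuous class and distributions: equality in (\ref{eq.30-1}) is attained only in distributions, so the argument must confirm that the distributional supremum is attained (Proposition~\ref{p.10-1}), is governed by Corollary~\ref{c.10-1}, and agrees with the supremum over $\cU_{\,\Omega}$ — this is what excludes the degenerate scenario in which the supremum would equal $2w_1/w_2$ without the geometric configuration being present.
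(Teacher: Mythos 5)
Your proof is correct and takes essentially the same route as the paper, which states Corollary~\ref{c.20-1} without a separate proof, treating it as the geometric translation of the equality characterization in Corollary~\ref{c.10-1}. Your necessity step (reading off the chord $\bc\bd$ from the extremal $u$) and your sufficiency step (constructing the tent function prescribed by Corollary~\ref{c.10-1} and approximating it by continuous functions) simply make explicit the details the paper leaves implicit.
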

\smallskip

Now let us turn to the uniform bounds over all directions~$\bh_1, \bh_2$.
Denote the biggest and the smallest widths of~$\Omega$ by $w_{\max}$ and $w_{\min}$
respectively.
Applying~(\ref{eq.20-1}) we obtain

\smallskip
\begin{theorem}\label{th.20-1}  For a given convex domain~$\Omega$, we have
\begin{equation}\label{eq.40-1}
\sup_{u, \bh_1, \bh_2}\, \frac{\|u_{\bh_1}\|_1}{\|u_{\bh_2}\|_1}\quad  \le \quad 2\, \frac{w_{\max}}{w_{\min}} \ .
\end{equation}
 \end{theorem}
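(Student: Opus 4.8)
The plan is to obtain Theorem~\ref{th.20-1} as a direct consequence of the directional estimate in Corollary~\ref{c.10-1}, reducing the uniform bound over all directions to an elementary monotonicity property of the width of~$\Omega$.

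First I would fix an arbitrary admissible function~$u\in\cU_{\,\Omega}$ and an arbitrary pair of unit vectors~$\bh_1,\bh_2$, and apply Corollary~\ref{c.10-1} to this triple. This gives
$$
\frac{\|u_{\bh_1}\|_1}{\|u_{\bh_2}\|_1}\ \le\ 2\,\frac{w_1}{w_2},
$$
where~$w_i$ is the width of~$\Omega$ parallel to~$\bh_i$, that is, the distance between the two lines of support of~$\Omega$ parallel to~$\bh_i$. The decisive feature of this inequality is that it holds for every individual choice of~$u,\bh_1,\bh_2$ separately, so that controlling its right-hand side uniformly will immediately control the supremum.

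Next I would compare~$w_1$ and~$w_2$ with the extreme widths. By the very definition of~$w_{\max}$ and~$w_{\min}$ as the largest and smallest widths of~$\Omega$ over all directions, we have~$w_1\le w_{\max}$ and~$w_2\ge w_{\min}$, whence~$\frac{w_1}{w_2}\le\frac{w_{\max}}{w_{\min}}$. Substituting this into the previous display yields
$$
\frac{\|u_{\bh_1}\|_1}{\|u_{\bh_2}\|_1}\ \le\ 2\,\frac{w_{\max}}{w_{\min}}
$$
for all~$u,\bh_1,\bh_2$, and taking the supremum over all admissible triples produces exactly~(\ref{eq.40-1}).

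Because each step is an immediate corollary of a result already in hand, I do not expect a genuine obstacle; the only point deserving care is the well-definedness of~$w_{\max}$ and~$w_{\min}$. These values are attained because the width of~$\Omega$ is a continuous function of the direction~$\bh$ (it equals the sum of the values of the support function of the convex compact~$\Omega$ at the two opposite directions perpendicular to~$\bh$), so it attains its maximum and minimum on the compact circle of unit vectors, legitimizing the bounds~$w_1\le w_{\max}$ and~$w_2\ge w_{\min}$.
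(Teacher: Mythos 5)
Your proposal is correct and follows exactly the paper's route: the paper derives Theorem~\ref{th.20-1} precisely by applying the directional estimate~(\ref{eq.20-1}) of Corollary~\ref{c.10-1}, with the bound $\frac{w_1}{w_2}\le\frac{w_{\max}}{w_{\min}}$ left implicit. Your additional remark on the attainment of $w_{\max}$ and $w_{\min}$ via continuity of the width (as a sum of support function values) on the compact unit circle is a harmless, correct refinement of the same argument.
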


 Theorem~\ref{th.20-1} yields the conditions under which the upper bound~(\ref{eq.40-1})
 is sharp.
\smallskip

\begin{theorem}\label{th.30-1}
For a convex domain~$\Omega$, the inequality~(\ref{eq.40-1}) becomes an
equality
 if and only if  there are the greatest and the smallest width
of~$\Omega$
with orthogonal directions. If this is the case, then
$\bh_1, \bh_2$ are parallel to those directions respectively.
\end{theorem}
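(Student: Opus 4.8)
The plan is to read off the equality case directly from the two inequalities whose composition produced~\eqref{eq.40-1}: the directional bound $\frac{\|u_{\bh_1}\|_1}{\|u_{\bh_2}\|_1}\le 2\frac{w_1}{w_2}$ of Corollary~\ref{c.10-1}, and $\frac{w_1}{w_2}\le\frac{w_{\max}}{w_{\min}}$. Writing $w(\bh)$ for the width of $\Omega$ parallel to $\bh$ (so $w_i=w(\bh_i)$), equality in~\eqref{eq.40-1} can hold only when the second inequality is tight, i.e. $w(\bh_1)=w_{\max}$ and $w(\bh_2)=w_{\min}$, and at the same time the first is tight for these directions, i.e. the geometric condition of Corollary~\ref{c.20-1} holds: some line parallel to $\bh_2$ meets two edges of $\Omega$ parallel to $\bh_1$. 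Thus everything reduces to understanding the edges parallel to a direction of maximal width.

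The geometric heart of the argument is a lemma I would prove first: \emph{if $w(\bh_1)=w_{\max}$, then each of the two support lines $\ell^\pm$ of $\Omega$ parallel to $\bh_1$ touches $\Omega$ at a single point $P^\pm$, and the chord $P^+P^-$ is orthogonal to $\bh_1$ with $|P^+P^-|=w_{\max}$.} The point is that $w_{\max}=\max_{\bh}w(\bh)$ equals the diameter of $\Omega$, since $w(\bh)$ is the length of the projection of $\Omega$ onto $\bh^\perp$ and the largest projection length over all directions is the diameter. Hence for any $P^+\in\ell^+\cap\Omega$, $P^-\in\ell^-\cap\Omega$ one has $\langle P^+-P^-,\bh_1^\perp\rangle=w(\bh_1)=w_{\max}=\mathrm{diam}\,\Omega\ge|P^+-P^-|\ge\langle P^+-P^-,\bh_1^\perp\rangle$, which forces $|P^+-P^-|=w_{\max}$ and $P^+-P^-\parallel\bh_1^\perp$; two distinct points on one support line would yield a difference simultaneously parallel to $\bh_1$ and, through a common opposite point, to $\bh_1^\perp$, which is impossible, so the contact sets are singletons.

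Granting the lemma, sufficiency is immediate: if $w_{\max},w_{\min}$ are attained in orthogonal directions $\bh_1\perp\bh_2$, then the edges parallel to $\bh_1$ are the points $P^\pm$ with $P^+P^-\parallel\bh_1^\perp=\bh_2$, so the line $P^+P^-$ witnesses the condition of Corollary~\ref{c.20-1}, giving $\sup_u\frac{\|u_{\bh_1}\|_1}{\|u_{\bh_2}\|_1}=2\frac{w_{\max}}{w_{\min}}$. For necessity I would take an extremizing sequence $(\bh_1^n,\bh_2^n,u^n)$ with $\max u^n=1$ and ratios tending to $2\frac{w_{\max}}{w_{\min}}$; from $\|u^n_{\bh_1^n}\|_1\le 2w(\bh_1^n)\le 2w_{\max}$ and $\|u^n_{\bh_2^n}\|_1\ge w(\bh_2^n)\ge w_{\min}$, both bounds become tight in the limit, so after passing to a subsequence $\bh_i^n\to\bh_i^\ast$ with $w(\bh_1^\ast)=w_{\max}$, $w(\bh_2^\ast)=w_{\min}$, and a limit function $u^\ast$ with $\|u^\ast_{\bh_1^\ast}\|_1=2w_{\max}$, $\|u^\ast_{\bh_2^\ast}\|_1=w_{\min}$. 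Then $u^\ast$ realizes equality in Corollary~\ref{c.20-1} at $\bh_1^\ast,\bh_2^\ast$, and the lemma forces $\bh_2^\ast\parallel(\bh_1^\ast)^\perp$, i.e. orthogonality; the same reasoning shows every optimal pair consists of orthogonal maximal/minimal width directions, which is the asserted parallelism.

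The step I expect to be the main obstacle is the passage to the limit in the last paragraph. Since the extremal functions for the upper bound are discontinuous on $\partial\Omega$ (Theorem~\ref{th.15-1}), one must work in the class of concave functions up to boundary values and establish compactness there, e.g. Blaschke selection for the hypographs, together with continuity of the functionals $u\mapsto\|u_{\bh}\|_1=2\int m_{\bh}$ under this convergence and under $\bh^n\to\bh^\ast$, via Proposition~\ref{p.10-1} and dominated convergence. This is precisely what legitimizes applying the equality characterization of Corollary~\ref{c.20-1} at the limiting directions; the remaining geometry is then routine.
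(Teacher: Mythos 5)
Your proposal is correct and follows essentially the same route as the paper: the geometric core --- that the edges of $\Omega$ realizing a maximal width are single points joined by a chord perpendicular to that width's direction --- is exactly the fact the paper invokes, and orthogonality of $\bh_1,\bh_2$ is then read off from the equality characterization of Corollaries~\ref{c.10-1} and~\ref{c.20-1}, just as in the paper's two-sentence argument. Your version additionally supplies two points the paper leaves implicit: a proof of the geometric fact via the identity $w_{\max}=\mathrm{diam}\,\Omega$, and the compactness (Blaschke-selection) argument showing that equality in~(\ref{eq.40-1}) produces a limiting extremal configuration, with maximal/minimal width directions, to which that characterization can legitimately be applied.
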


\noindent \textbf{Proof.}
For each maximal width, the corresponding  opposite  edges possess
 a common perpendicular with ends on~$\Omega$. By Corollary~1, if those lines are parallel
to~$\bh_1$, then~the segment~$\bc\bd$ must be their common perpendicular
parallel to~$\bh_2$.

{\hfill $\Box$}
\smallskip

\begin{cor}\label{c.30-1}
If the inequality~(\ref{eq.40-1}) becomes an equality, then
the directions~$\bh_1$ and~$\bh_2$ are orthogonal and correspond to the
maximal and minimal widths of~$\Omega$.
\end{cor}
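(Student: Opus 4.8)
This corollary is an immediate logical consequence of Theorem~\ref{th.30-1}, so my plan is simply to unpack that theorem's conclusion rather than to reprove anything from scratch. Theorem~\ref{th.30-1} asserts that equality in~(\ref{eq.40-1}) holds if and only if $\Omega$ possesses a greatest width and a smallest width whose directions are orthogonal, and it further states that in that event the extremal vectors $\bh_1, \bh_2$ are parallel to those two directions respectively. I would therefore begin by assuming equality in~(\ref{eq.40-1}) and quoting the theorem to obtain such an orthogonal pair of width directions together with the identification of $\bh_1$ and $\bh_2$.

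The three claims of the corollary then fall out directly. Since $\bh_1$ is parallel to a direction realizing the greatest width $w_{\max}$, it corresponds to a maximal width of $\Omega$; symmetrically, $\bh_2$, being parallel to a direction realizing $w_{\min}$, corresponds to a minimal width. Orthogonality of $\bh_1$ and $\bh_2$ is then inherited from the orthogonality of the two width directions supplied by the theorem, precisely because $\bh_1$ and $\bh_2$ are parallel to them.

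I do not expect any genuine obstacle here, since the substantive content already lives in Theorem~\ref{th.30-1} (and, beneath it, in the equality analysis of Corollary~\ref{c.10-1}--\ref{c.20-1} combined with the geometric fact that opposite edges realizing a width admit a common perpendicular with endpoints on~$\Omega$). The corollary records only the necessary direction of the theorem's equivalence, discarding both the sufficiency and the geometric existence statement; the sole point worth checking is that nothing stronger is being asserted, which it is not.
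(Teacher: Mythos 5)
Your proposal is correct and coincides with the paper's treatment: the paper states Corollary~\ref{c.30-1} with no separate proof, precisely because it is the immediate consequence of Theorem~\ref{th.30-1} that you describe. Reading off the necessary direction of the equivalence, together with the theorem's identification of $\bh_1,\bh_2$ as parallel to the orthogonal maximal and minimal width directions, is exactly what is needed and nothing more.
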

\smallskip
\begin{ex}\label{ex.10-1}
{\em  If $\Omega = \bigl\{(x, y) \in \re^2: \
x^2 + y^2 \le 1\bigr\}$ is a disc, then there exists a unique, up to rotations and normalization, function realizing (in distributions)
the biggest ratio~$\frac{\|u_{x}\|_1}{\|u_{y}\|_1} =2$.
This is~$u(x,y) = 1-|x|$.  }
\end{ex}

\smallskip

\begin{ex}\label{ex.20-1} {\em If $\Omega$ is a non-isosceles triangle,
then there exists a unique, up to normalization, function realizing (in distributions)
the biggest ratio~$\frac{\|u_{\bh_1}\|_1}{\|u_{\bh_2}\|_1}\, =\, \frac{2a}{h}$, where
$a$ is the longest side and $h$ is the altitude to it.
This is the linear function equal to one on the longest side and to zero
at the opposite vertex. }
 \end{ex}
\bigskip

Corollary~\ref{c.20-1} and Theorem~\ref{th.30-1} give geometric conditions for the maximal possible
values of the ratio between the $L_1$-norms of the partial derivatives. What
happens if those conditions are not satisfied? We now formulate two open problems.
\smallskip

\begin{prob}\label{pr.10-1}
For a given convex domain~$\Omega$ and
two directions~$\bh_1, \bh_2$ find the supremum of the ratio
$\frac{\|u_{\bh_1}\|_1}{\|u_{\bh_2}\|_1}$ over all admissible functions~$u$.
\end{prob}
\smallskip

Corollary~\ref{c.10-1} states only that this supremum does not exceed~$\frac{2w_1}{w_2}$.

\smallskip

\begin{prob}\label{pr.20-1}
For a given convex domain~$\Omega$,
find the supremum of the ratio
$\frac{\|u_{\bh_1}\|_1}{\|u_{\bh_2}\|_1}$ over all admissible functions~$u$ and directions
$\bh_1, \bh_2$.
\end{prob}

Theorem~\ref{th.20-1} limits this  supremum by the value~$\ 2\, \frac{w_{\max}}{w_{\min}}$.

\vspace{1cm}

\begin{center}
\large{\textbf{3. The case~$\ \bp \  \mathbf{\in \, (1, +\infty)}$}}
\end{center}
\medskip

If~$1< p< \infty$, then the supremum of the ratio between~$\|u_x\|_{p}$ and~$\|u_y\|_{p}$
is finite for some domains~$\Omega$ and infinite for others. The main result of this section, Theorem~\ref{th.10-p}, gives a criterion for deciding between those two cases.
To formulate it we need to introduce some notation.
The {\it tangent cone} to $\Om$ with the vertex at $\xi \in \pl\Om$ is the intersection of all closed half-planes containing $\Om$ and bounded by lines of support through $\xi$.
A point $\xi \in \partial\Omega$ is called {\it regular}, if there is only one line of support to $\Omega$ through~$\xi$, and {\it singular} otherwise. Equivalently, $\xi$ is regular, if the tangent cone  to $\Omega$ with the vertex at $\xi$ is a half-plane, and singular otherwise.

Consider a tangent cone to $\Omega$ with the vertex at a singular point of $\partial\Omega$.
Any straight line that intersects the tangent cone by its vertex is called an {\it angular} support line.

\begin{figure}[h]
\begin{picture}(0,140)
 \scalebox{1}{

 \rput(1.3,2.4){
 \psline[linestyle=dashed](0,-2)(0,2) \psline[linestyle=dashed](5,-2)(5,2)
 \pscurve(0,0)(1.5,0.9)(3.5,1)(5,0.5) \pscurve(0,0)(1.5,-1)(4,-0.6)(5,0.5)
\rput(2.5,0.2){\scalebox{1.5}{$\Om$}} \rput(-0.8,-1.7){(a)}
}

\rput(10,2.4){
\psellipse(2.5,0)(2.0,2.4)
\pspolygon[fillstyle=solid,fillcolor=white,linewidth=0pt,linecolor=white](1.77,0)(5,0)(5,-2.5)(1.77,-2.5)
\psline(4.5,0)(1.77,-2.23)
\psline[linestyle=dashed](0.5,-2)(0.5,2) \psline[linestyle=dashed](4.5,-2)(4.5,2)
\rput(2.5,0.2){\scalebox{1.5}{$\Om$}} \rput(-0.8,-1.7){(b)}
}

}
\end{picture}
\caption{In figure (a), both vertical lines of support to $\Omega$ are angular, while in figure (b), both lines are not angular: the left line is tangent, and the right one is half-tangent to $\Omega$.}
\label{fig:ang}
\end{figure}
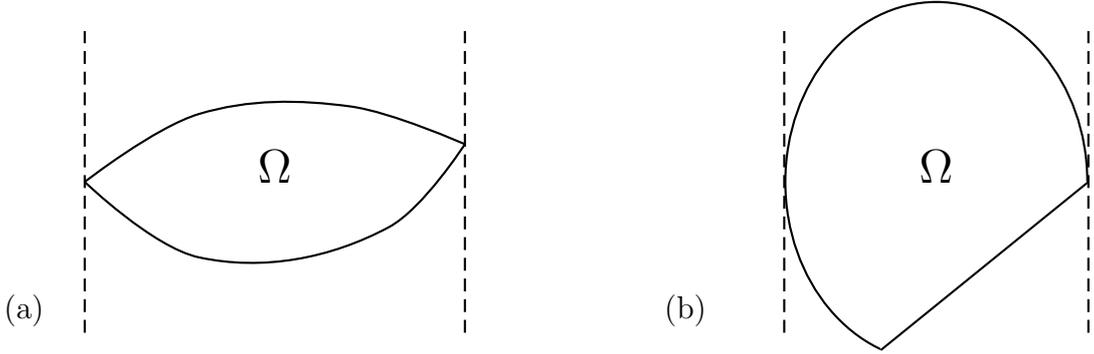

\medskip

\begin{theorem}\label{th.10-p} For a domain~$\Omega$, we have~$K_p < \infty$
if and only if both vertical (that is, parallel to the $y$-axis) lines of support
of~$\Omega$  are angular.
\end{theorem}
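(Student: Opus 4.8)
My plan is to treat the two implications separately, in each case localizing the analysis to a small neighbourhood of the rightmost point of $\Om$ (the point on the right vertical line of support); the leftmost point is handled symmetrically, and the rest of $\Om$ will contribute comparably to $\|u_x\|_p$ and $\|u_y\|_p$. Place the rightmost point at the origin and write the right part of $\partial\Om$ as a graph $x=b(y)$; since it bounds a convex set, $b$ is concave, and a one-sided tangent ray of $\partial\Om$ at the tip, having direction $(b'(0^{\pm}),1)$, is vertical exactly when $b'(0^{\pm})=0$. Thus the right line of support is angular precisely when $b'(0^{+})\neq0$ and $b'(0^{-})\neq0$, and non-angular (smooth vertical tangent, or a vertical edge where $b\equiv0$) precisely when one of these one-sided slopes vanishes.

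For the ``only if'' direction I would argue by contraposition with explicit wall-type competitors. Suppose $b'(0^{+})=0$. Fix once and for all a concave cap $\Psi=\min_i L_i$, a minimum of affine functions each vanishing on a support line of the far part of $\partial\Om$; then $\Psi$ is concave, nonnegative, vanishes on the far arc, and stays positive on a neighbourhood of the tip. For $\alpha>0$ set $u_\alpha=\min\bigl(\alpha(b(y)-x),\,\Psi\bigr)$. This lies in $\cU_\Om$: it is a minimum of concave functions, and it vanishes on $\partial\Om$ because $b(y)-x=0$ on the right arc while $\Psi=0$ on the far arc (and $b(y)-x\ge0$ everywhere on $\partial\Om$). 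On the strip where the first term is active — a layer of thickness of order $1/\alpha$ hugging the right arc — one has $u_x=-\alpha$ and $u_y=\alpha\,b'(y)$, so the pointwise ratio $|u_x/u_y|=1/|b'(y)|$ blows up as $y\to0$. Letting $\alpha\to\infty$ (so that the fixed contribution of $\Psi$ becomes negligible) and then shrinking the relevant arc $y\in[0,Y_*]$ onto the tip, one finds
\[
\frac{\|u_x\|_p^{\,p}}{\|u_y\|_p^{\,p}}\ \sim\ \frac{\int_0^{Y_*}\Psi(b(y),y)\,dy}{\int_0^{Y_*}|b'(y)|^p\,\Psi(b(y),y)\,dy}\ \longrightarrow\ \infty\qquad(Y_*\to0),
\]
since $|b'(y)|\to0$. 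In the vertical-edge case $b'\equiv0$ and $u_y\equiv0$ on the strip, so the conclusion is immediate. Hence a non-angular vertical line of support forces $K_p=\infty$.

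For the ``if'' direction, assume both vertical lines of support are angular. Since $b$ is concave, $b'$ is non-increasing, so from $b'(0^{+})<0$ and $b'(0^{-})>0$ we get a uniform bound $|b'(y)|\ge\beta>0$ along the whole right arc, and symmetrically on the left. The starting point is a pointwise boundary inequality: as $u$ vanishes on $\partial\Om$ and is nonnegative inside, its gradient at a boundary point is directed along the inner normal (the tangential derivative of $u$ along $\partial\Om$ being zero), so on the right arc $\nabla u\parallel(-1,b'(y))$, whence $|u_x|=|u_y|/|b'(y)|\le\beta^{-1}|u_y|$; on the top and bottom arcs the normal is nearly horizontal and $|u_x|\le|u_y|$ holds trivially. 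Thus $|u_x|\le\beta^{-1}|u_y|$ a.e.\ on $\partial\Om$. The content of the theorem is exactly that angularity bounds this trace ratio, whereas non-angularity lets it escape to infinity over a set of positive length.

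The remaining, and I expect hardest, step is to propagate this boundary inequality to the integral bound $\|u_x\|_p\le C\|u_y\|_p$. The plan is to foliate the region right of the ridge $\{x=\tau(y)\}$, where $u(\cdot,y)$ attains its maximum, by horizontal chords: on each chord $-u_x$ is non-decreasing in $x$ by concavity, so $|u_x(x,y)|\le|u_x(b(y),y)|\le\beta^{-1}|u_y(b(y),y)|$, giving
\[
\int\!\!\int_{x>\tau(y)}|u_x|^p\,dx\,dy\ \le\ \beta^{-p}\int |u_y(b(y),y)|^p\,(b(y)-\tau(y))\,dy,
\]
together with the symmetric estimate on the left. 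The obstacle is to dominate the right-hand side by $\int\!\!\int_\Om|u_y|^p$, i.e.\ to control the boundary trace of $|u_y|$ by interior values. Here I would use angularity a second time, now as uniform transversality of the right arc to the horizontal, to attach to each boundary point a triangular sector of definite aperture on which concavity of $u$ in $y$ forces $|u_y|\gtrsim|u_y(b(y),y)|$; summing these sectors yields the required trace bound. An alternative route that avoids traces is to start from the identity $\int_\Om|u_x|^p=(p-1)\int_\Om u\,|u_x|^{p-2}(-u_{xx})$, obtained by integrating by parts and using $u|_{\partial\Om}=0$ and $u_{xx}\le0$, and to compare it with the analogous expression for $\|u_y\|_p$ through the Hessian measure of the concave function $-u$. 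Either way the geometric input is precisely the uniform lower bound $|b'|\ge\beta$ supplied by angularity, which is exactly what the non-angular construction above destroys.
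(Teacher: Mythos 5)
Both halves of your argument contain genuine gaps; the boundary characterization of angularity and the boundary inequality $|u_x|\le\beta^{-1}|u_y|$ (which is essentially the paper's Lemma~\ref{l.tan}, used there for $p=\infty$) are fine, but neither implication is actually proved.

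\emph{Necessity.} Your construction does not produce a blow-up. Once $\Psi$ is fixed ``once and for all,'' the region where the wall term is active is determined by $\alpha$ alone: it is a strip of thickness $\approx \Psi(b(y),y)/\alpha$ running along the \emph{entire} right arc, not along $y\in[0,Y_*]$. (Any nonnegative concave function that is positive somewhere is positive throughout the interior of $\Omega$, and your $\Psi=\min_i L_i$, a minimum of affine functions nonnegative on $\Omega$, has positive trace on essentially the whole right arc.) Hence, for $p>1$, as $\alpha\to\infty$ the ratio $\|u_x\|_p^p/\|u_y\|_p^p$ converges to $\int\Psi(b(y),y)\,dy\big/\int|b'(y)|^p\Psi(b(y),y)\,dy$ with both integrals over the whole arc; this is a \emph{finite} constant, since both integrals are dominated by the part of the arc where $|b'|$ is bounded away from zero. ``Shrinking the relevant arc onto the tip'' is not an operation your construction provides: blow-up requires a \emph{family} of caps whose boundary traces concentrate near the tip, which is exactly what the paper's plateau functions $u_{\varphi,\varepsilon}$ (smallest concave function equal to $1$ on a shrinking set $D$ near a point $\xi$ of the critical arc and $0$ on $\partial\Omega$) achieve. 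Moreover, the ``half-tangent'' case of non-angularity (Fig.~\ref{fig:ang}(b): $b'(0^+)=0$ but $b'(0^-)>0$) needs \emph{one-sided} concentration: any cap positive near the tip on both sides contributes at least $b'(0^-)^p\int_{y<0}\Psi(b(y),y)\,dy$ to the denominator, which caps the ratio. Your formula integrates over $[0,Y_*]$ only and never addresses this case; the paper handles it automatically because $\xi$ is chosen on the open arc of boundary points all of whose support normals lie within $\varphi$ of the horizontal, and in the half-tangent case that arc lies on one side of the tip.

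\emph{Sufficiency.} This half is incomplete by your own admission, and the route you sketch cannot be completed, because the trace inequality it requires is false. Your chord-wise maximization gives $\iint_{x>\tau(y)}|u_x|^p\,dx\,dy\le\beta^{-p}\int|u_y(b(y),y)|^p\,(b(y)-\tau(y))\,dy$, and you then need $\int|u_y(b(y),y)|^p\,(b(y)-\tau(y))\,dy\lesssim\iint_\Omega|u_y|^p\,dx\,dy$. Take $\Omega$ the square $|x|+|y|\le1$ (both vertical support lines angular, $\beta=1$) and $u_\varepsilon=\min\bigl(1,\,(1-|x|-|y|)/(\varepsilon\sqrt2)\bigr)$: the boundary trace of $|u_y|$ is $\approx\varepsilon^{-1}$ along the whole right arc and the chord lengths are $O(1)$, so the left-hand side is $\approx\varepsilon^{-p}$, whereas $\iint|u_y|^p\approx\varepsilon\cdot\varepsilon^{-p}=\varepsilon^{1-p}$; the two sides differ by a factor $1/\varepsilon\to\infty$. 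For the same reason your sector claim ($|u_y|\gtrsim|u_y(b(y),y)|$ on a sector of definite aperture) is false: a steep wall has enormous boundary gradient but moderate gradient on any set of fixed size. The loss occurs exactly in replacing $|u_x|$ along a chord by its maximal (boundary) value, so no improved trace estimate can rescue the scheme. This is why the paper's sufficiency proof never touches boundary traces: it normalizes the two corners to $(0,0)$ and $(2,0)$, proves the \emph{interior} pointwise bound $|u_x|\le\frac1x(u-y\,u_y)$ by evaluating the tangent plane of the graph at the corner points (Lemma~\ref{l.10-p}), and then absorbs the singular factor $1/x$ by applying the one-dimensional Poincar\'e inequality~(\ref{eq.Poinc-p}) on each vertical slice, whose length is $\le 2mx$ precisely because the corners are angular. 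Your integration-by-parts alternative is only a one-line sketch (with unaddressed boundary terms and the Hessian of a concave function being merely a measure) and does not fill this gap.
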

\noindent {\tt Proof}. {\em (Necessity. If at least one vertical line of support is not angular,
then~$K_p = \infty$). }
\smallskip

Denote by $l_\al$ the line of support with the outward normal $(\cos\al, \sin\al)$,\, $-\pi < \al \le \pi$, and by $\Pi_\al$ the closed half-plane bounded by $l_\al$ and containing $\Om$. By the hypothesis, one of the lines $l_0$, $l_\pi$ is not angular; let it be $l_0$. Fix a value $0 < \vphi < \pi/2$ and set
$$
\Om' = \Om'_\vphi = \cap_{|\al|\ge\vphi} \Pi_\al;
$$
see Fig.~\ref{fig:AB}. Since $l_0$ is not angular, there is an open arc of the boundary $\pl\Om$ contained in the interior of $\Om'$. All lines of support at points of this arc correspond to angles in $(-\vphi,\, \vphi)$. Choose a point $\xi$ on the arc, and denote $r = \text{dist}(\xi, \pl\Om') > 0$.

Take $\ve > 0$ and denote
$$
D = D(\vphi,\ve) := \{ (x,y) :\ \text{dist}\big( (x,y), \xi \big) \le r/2, \ \, \text{dist}\big( (x,y), \pl\Om \big) \ge \ve \} \cap \Om
$$
(see Fig.~\ref{fig:AB}). Let $u = u_{\vphi,\ve}$ be the smallest concave function satisfying $u\rfloor_{\pl\Om} = 0$ and $u\rfloor_{D} = 1$. Each point of the graph of $u$ belongs

(i) either to the relative interior point of $D \times \{ 1 \}$;

(ii) or to a plane through a point of $\pl D \times \{ 1 \}$ and a point of $(\pl\Om \setminus \pl\Om') \times \{ 0 \}$;

(iii) or to a plane through a point of $\pl D \times \{ 1 \}$ and a point of $(\pl\Om \cap \pl\Om') \times \{ 0 \}$.\\
Correspondingly, $\Om$ is the disjoint union of three sets, $\Om = \Om_0 \cup \Om_1 \cup \Om_2$, where $\Om_0$ is the interior of $D$, and $\nabla u\rfloor_{\Om_0} = 0$;\, $\arg(\nabla u) \not\in [-\vphi,\, \vphi]$ at each regular point of $\Om_1$;\, and $\arg(\nabla u) \in [-\vphi,\, \vphi]$ at each regular point of $\Om_2$.

\begin{figure}[h]
\begin{picture}(0,160)
\scalebox{1}{

\rput(7.5,0.1){
\psarc(0,0){5}{0}{180} \psline(-5,0)(5,0) \psline[linestyle=dashed](2.5,4.33)(7.5,1.443)(5,0)
     \psdots(4.787,1.443)   
\pspolygon[linewidth=0pt,linecolor=white,fillcolor=lightgray,fillstyle=solid] (4.45,0.83)(4.23,1.0)(4.09,1.4)(4.18,1.8)(4.45,2.025)(4.66,1.443)(4.82,0.75)
\psarc[linewidth=0.8pt](4.787,1.443){0.7}{120}{275}   \psarc[linewidth=0.8pt](0,0){4.9}{8.5}{24.75}

\rput(5.1,1.443){\scalebox{1}{$\xi$}}  \rput(4.4,1.4){\scalebox{1}{$D$}}  
}}
\end{picture}
\caption{Here $\Om$ is the upper half-circle, and $\Om'$ contains $\Om$ and is bounded by the dashed broken line on the right hand side.}
\label{fig:AB}
\end{figure}
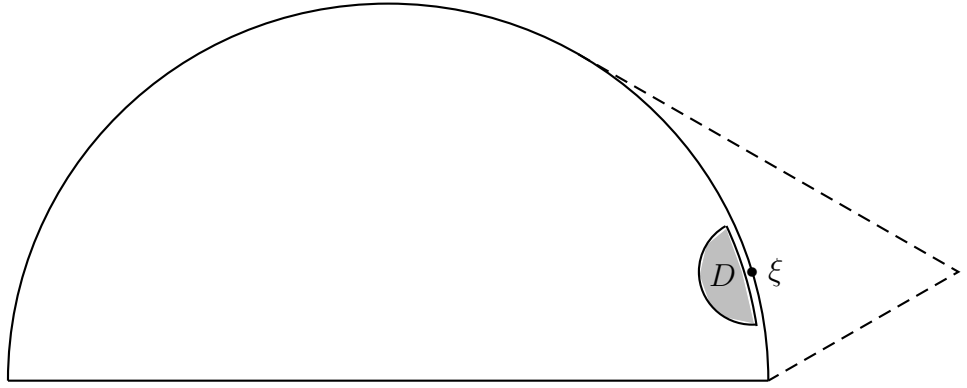

Thus, one has
$$
\int_\Om |\nabla u|^p\, dx dy = I_1 + I_2,
$$
where
$$
I_j  = I_j(\vphi,\ve) = \int_{\Om_j} |\nabla u|^p\, dx dy, \quad j = 1,\, 2.
$$
For regular points of $\Om_1$ one has $|\nabla u| \le \dfrac{1}{r/2}$, hence
$$
I_1 \le |\Om| (2/r)^p.
$$
Here and in what follows, $|\Om|$ means the area of $\Om$.

Further, the set $\Om_2$ contains the layer $L$ of width $\ve$ between $D$ and $\pl\Om$. The area of this layer is of the order of $\ve$, and $|\nabla u| = 1/\ve$ at all points of the layer, therefore
$$
I_2 \ge \int_{L} |\nabla u|^p\, dx dy \sim \frac{\ve}{\ve^{p}} \to \infty \quad \text{as} \quad \ve \to 0.
$$

On the other hand, at each regular point of $\Om_2$ one has $|u_y| \le \tan\vphi\, |u_x|$, hence
$$
|\nabla u|^p = \big( u_x^2 + u_y^2 \big)^{p/2} \le (1 + \tan^2 \vphi)^{p/2} u_x^p = \frac{1}{\cos^p \vphi}\, u_x^p,
$$
and
$$
\int_{\Om_2} u_x^p\, dx dy \ge \cos^p \vphi \int_{\Om_2} |\nabla u|^p\, dx dy = \cos^p \vphi\, I_2
\quad \Longrightarrow \quad \int_{\Om_2} u_y^2\, dx dy \le (1 - \cos^p \vphi )\, I_2.
$$

Fix $\vphi$ (and therefore, $r$) and let $\ve \to 0$; then $I_2$ goes to infinity, and $I_1$ is bounded, and therefore, the integrals
$$
I_{1x} := \int_{\Om_1} u_x^p\, dx dy \quad \text{and} \quad I_{1y} := \int_{\Om_1} u_y^p\, dx dy
$$
are also bounded. It follows that
$$
\frac{\int_\Om u_x^p\, dx\, dy}{\int_\Om u_y^p\, dx\, dy} =
\frac{\int_{\Om_1} u_x^p\, dx\, dy + \int_{\Om_2} u_x^p\, dx\, dy}{\int_{\Om_1} u_y^2\, dx\, dy + \int_{\Om_2} u_y^2\, dx\, dy} \ge
\frac{I_{1x} + \cos^p \vphi\, I_2}{I_{1y} + (1 - \cos^p \vphi)\, I_2},
$$
hence the lower partial limit of the ratio
$$
\liminf_{\ve\to\infty}\, \frac{\int_\Om u_x^p\, dx\, dy}{\int_\Om u_y^p\, dx\, dy} \ge \frac{\cos^p \vphi}{1 - \cos^p \vphi}.
$$
Since $\vphi$ can be made arbitrarily small, the limit of the ratio is $+\infty$. This proves the part of necessity of Theorem \ref{th.10-p}.
\bigskip

 \noindent {\em (Sufficiency. If both vertical lines of support are angular,
then~$K_p < \infty$). }
\smallskip

Without loss of generality it may be assumed
that~$\Omega$ has the extreme left point at the origin~$(0,0)$.
Its extreme right point
 has coordinates~$(w_x, c)$, where~$w_x> 0$ is the length of the projection
 of~$\Omega$ to the~$x$-axis.

After the linear change of variables
\begin{equation}\label{eq.affine-p}
x \ = \ \frac{w_x}{2}\, \tilde x\, , \qquad
y \ =  \ \frac{c}{2}\, \tilde x \ + \ \tilde y
\end{equation}
the domain~$(x, y) \in \Omega$
is mapped to a domain~$(\tilde x, \tilde y) \in \tilde \Omega $  with the extreme
left point~$(0,0)$ and extreme right point~$(2,0)$. The Jacobian of this transform is equal to~$\frac{w_x}{2}$
and~$u_{y}\, = \, u_{\tilde y}, \ u_{x}\, = \,
\frac{2}{w_x}\, u_{\tilde x} \, -  \, \frac{c}{w_x}\, u_{\tilde y}$. Therefore,
$$
\bigl\|u_y \bigr\|_p\ = \ \bigl(w_x/2 \bigr)^{1/p}\bigl\|u_{\tilde y} \bigr\|_p\, ;
\qquad \bigl\|u_x \bigr\|_p\ \le \ \bigl(w_x/2 \bigr)^{1/p}
\left(\frac{2}{w_x}\, \bigl\|u_{\tilde x} \bigr\|_p\ + \
\frac{|\,c\,|}{w_x}\, \bigl\|u_{\tilde y} \bigr\|_p\right)\, ,
$$
where the $L_p$-norms of~$u_{\tilde x} , u_{\tilde y}$ are computed over the
domain~$\tilde \Omega$. Thus,
\begin{equation}\label{eq.change-p}
 \frac{\|u_x \|_p}{\|u_y \|_p}\quad \le \quad
\frac{2}{w_x}\,\frac{\|u_{\tilde x}\|_p}{\|u_{\tilde y}\|_p}\ + \ \frac{|\,c\,|}{w_x}\  .
 \end{equation}
  Thus,
it suffices to prove the theorem for the domain~$\tilde \Omega$.
To simplify the notation
we will  assume that the original domain~$\Omega$ has the extreme left and
right angular points~$(0,0), \, (2,0)$,
so we shall not use tildes. This way we obtain an upper bound
for~$\frac{\|u_{\tilde x}\|_p}{\|u_{\tilde y}\|_p}$ and then substitute it to~(\ref{eq.change-p}).

Let~$\Omega$ be bounded below and above by the graphs of functions~$\varphi_1(x)$
and~$\varphi_2(x)$ respectively. So, $\varphi_1$ is convex,~$\varphi_2$
is concave on the segment~$[0,2]$ and both those functions
 vanish at the ends of the segment.

 We estimate the ratio of $L_p$-norms of~$u_x$ and~$u_y$ on the
left part~$\frac12 \, \Omega \, = \, \bigl\{ (x, y) \in \Omega: \, 0\le x\le 1 \bigr\}$, then the same argument can be applied to the right part. In other words, we
prove that $\|u_x\|_{L_p(\frac12 \Omega)} \le M_p \|u_y\|_{L_p(\frac12 \Omega)}$
for some constant~$M_p$.
Then the same argument gives the same
estimate for the right part of~$\Omega$. Summing two parts, we get
$\|u_x\|_{L_p(\Omega)} \le M_p \|u_y\|_{L_p(\Omega)}$, and hence~$K_p \le M_p$.
We will use the short notation~$\|u\| = \|u\|_{L_p(\frac12 \Omega)}$, and
similarly for~$u_x, u_y$. We begin the the following key lemma.

\begin{lemma}\label{l.10-p} At every point
$(x,y)  \in  \frac12 \, \Omega$ where~$u$ is differentiable, we have $\, \bigl|u_x\bigl| \, \le \,
\frac1x\, \bigl(\, u\, - \, y\, u_y \bigr)$.
\end{lemma}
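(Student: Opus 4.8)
The plan is to use the supporting-hyperplane characterization of concavity: at any point $(x,y)$ where $u$ is differentiable, the tangent plane
$$
T(x',y') \;=\; u(x,y) + u_x(x,y)\,(x'-x) + u_y(x,y)\,(y'-y)
$$
lies above the graph, i.e. $u(x',y') \le T(x',y')$ for all $(x',y') \in \Omega$. I would then extract the estimate by evaluating $T$ at two carefully chosen points on the base segment of $\Omega$, one yielding an upper bound for $u_x$ and one a lower bound.

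First I would record two preliminary facts. By the normalization used in the sufficiency proof, the extreme left and right points of $\Omega$ are $(0,0)$ and $(2,0)$; since both lie in $\Omega$ and $\Omega$ is convex, the whole base segment $\{(t,0):0\le t\le 2\}$ is contained in $\Omega$. Second, $u\ge 0$ throughout $\Omega$: every point of $\Omega$ lies on a chord joining two boundary points, where $u$ vanishes, so concavity along that chord gives $u\ge 0$. In particular $u(0,0)=0$ and $u(2x,0)\ge 0$ for $0\le x\le 1$.

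For the upper bound I would evaluate the tangent inequality at the corner $(0,0)\in\Omega$:
$$
0 = u(0,0) \le T(0,0) = u - x\,u_x - y\,u_y,
$$
which rearranges to $x\,u_x \le u - y\,u_y$. For the lower bound I would evaluate at the point $(2x,0)$, which belongs to $\Omega$ precisely because $0\le x\le 1$ forces $0\le 2x\le 2$:
$$
0 \le u(2x,0) \le T(2x,0) = u + x\,u_x - y\,u_y,
$$
which rearranges to $-x\,u_x \le u - y\,u_y$. Together these read $-(u-y\,u_y)\le x\,u_x\le u-y\,u_y$, i.e. $x\,|u_x|\le u-y\,u_y$; dividing by $x>0$ gives the claim. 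As a by-product, adding the two inequalities shows $u-y\,u_y\ge 0$, so the right-hand side is legitimately nonnegative.

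The one point requiring care — and the reason the estimate is established on the left half $\frac12\,\Omega$ rather than on all of $\Omega$ — is the membership $(2x,0)\in\Omega$ used for the lower bound: this is exactly the constraint $2x\le 2$, which holds if and only if $x\le 1$. I would also note that differentiability enters only through the tangent-plane inequality, and that the bound is meaningful only for $x>0$; since the extreme left edge of $\Omega$ reduces to the single angular point $(0,0)$, every other point of $\frac12\,\Omega$ has $x>0$, so no genuine difficulty arises there.
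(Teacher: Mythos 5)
Your proof is correct and follows essentially the same route as the paper's: concavity makes the tangent plane at $(x,y)$ lie above the graph, and evaluating it at points of the segment joining the two angular points yields the bound. The only difference is that where the paper evaluates at the fixed corner $(2,0)$ (handling the case $u_x<0$ via $|u_x|\le \frac{1}{2-x}(u-y\,u_y)$ and then invoking $2-x\ge x$), you evaluate at the variable point $(2x,0)$ (justified by $u\ge 0$ on the base segment), which produces the two-sided bound $x\,|u_x|\le u-y\,u_y$ symmetrically and dispenses with the case split on the sign of $u_x$.
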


\noindent {\tt Proof}. Denote by~$L(x,y)$ the tangent plane to the
graph of the function~$u$ at the point $(x,y)\in \frac12 \, \Omega$; see Fig.~\ref{fig:plane}. The equation of this
plane in~$\re^3$ is
$$
(x', y', z') \in L(x,y) \qquad \Leftrightarrow \qquad \ z'-z \ = \ (x'-x)\, u_x(x,y) \ + \ (y'-y)\, u_y(x,y)\, ,
$$
where $z\, =\, u(x,y)$.
 Since $u$ is concave, it follows that
 $L(x,y)$ is located  above
the graph. i.e., $z' \, \ge \, u(x',y')$.
At the point~$(0,0)$, we have~$u = 0$ and therefore,
$z'\ge  0$.  Substituting the point~$(0,0,z')$ to the equation of the plane, we get
$z + (-x )u_x + (-y)u_y \, = \, z' \ge 0$. Thus,
$\, u  \, -\, x \, u_x \, - \, y\, u_y \, \ge \, 0$ and so,
$u_x \, \le \,
\frac1x\, \bigl(\, u\, - \, y\, u_y \bigr)$.\
 This proves the lemma in the case~$u_x(x,y) \ge 0$.

If $u_x(x,y) < 0$, then we
apply the same argument to the
right angular point~$(2,0)$, where  we also have~$z' \ge 0$.
Substituting the point~$(2,0,z')$ to the equation of~$L(x,y)$, we obtain
 $z + (2-x )u_x + (-y)u_y \, \ge 0$,
which implies $u_x \, \ge \,
\frac{1}{2-x}\, \bigl(\, - u\, + \, y\, u_y \bigr)$.
Consequently, $|u_x| \, = \, -u_x \, \le \, \frac{1}{2-x}\,
\bigl(\, u\, - \, y\, u_y \bigr)$. The right hand side must be positive,
and  since~$x\le 1$, we have
$2-x \ge x$, which implies  $|u_x| \,  \le \, \frac{1}{x}\,
\bigl(\, u\, - \, y\, u_y \bigr)$. This completes the proof of the lemma.

\begin{figure}[h]
\begin{picture}(0,200)
\scalebox{0.85}{

\rput(5.5,1.5){

\pscustom[fillstyle=vlines,fillcolor=yellow]{
\pscurve(0,0)(1.5,-1)(4.4,-1.5)(6.7,-1)(8,0)
\pscurve[linestyle=dashed](8,0)(6.5,1)(4,1.35)(1.35,0.9)(0,0)
}

\pspolygon[linewidth=0pt,linecolor=white,fillstyle=solid,fillcolor=white](2.65,0)(3.6,0)(3.6,0.45)(2.65,0.45)

\pspolygon[linewidth=0pt,linecolor=white,fillstyle=solid,fillcolor=white](5.8,-0.27)(5.8,0.28)(6.4,0.28)(6.4,-0.27)

\psdots[dotsize=3pt](2.5,0.25)(2.5,3.25) (0,1.8)(8,4.8)
\pspolygon(-1.5,0)(-4.5,2.5)(7.5,7)(10.5,4.5) \pscurve(0,0)(1,2.7)(4.5,4.3)(7.25,2.75)(8,0)
\psline[linestyle=dashed](2.5,0.25)(2.5,3.25)
     \psline(0,0)(0,0.4)       \psline[linestyle=dashed](0,0.4)(0,1.8)
     \rput(-0.8,1.85){\scalebox{0.9}{$(0,0,z')$}}     \rput(8.7,5.15){\scalebox{0.9}{$(2,0,z')$}}
         \psline(8,0)(8,3.4)       \psline[linestyle=dashed](8,3.4)(8,4.8)
\rput(6.1,0){\scalebox{1.5}{$\Om$}} \rput(3.1,0.25){\scalebox{0.9}{$(x,y)$}} \rput(8.55,0){\scalebox{1}{$(2,0)$}}  \rput(-0.55,-0.05){\scalebox{1}{$(0,0)$}} \rput(3.22,3.1){\scalebox{0.9}{$(x,y,z)$}}  \rput{23}(-0.4,3.5){\scalebox{1.25}{$L(x,y)$}}
\rput(4.55,3.82){\scalebox{1.2}{$u(x,y)$}}
\rput{-27}(1.1,-1.2){\scalebox{1.17}{$y = \vphi_1(x)$}}    \rput{-16}(6.4,1.5){\scalebox{1.17}{$y = \vphi_2(x)$}}
}}
\end{picture}
\caption{The tangent plane to the graph of $u$.}
\label{fig:plane}
\end{figure}
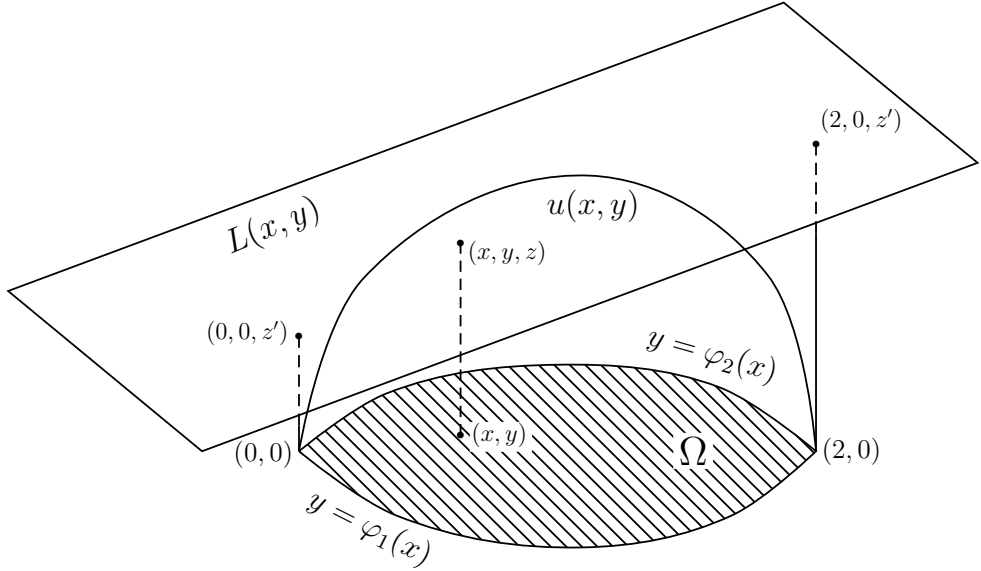

{\hfill $\Box$}
\smallskip

 Now we continue the proof of Theorem~\ref{th.10-p}. Lemma~\ref{l.10-p} yield that
\begin{equation}\label{eq.both-p}
\bigl\|u_x \bigr\| \ \le \ \Bigl\|\frac1x \, u \Bigr\|\ + \
\Bigl\|\frac{y}{x} \, u_y \Bigr\| \, ,
\end{equation}
 where we use the short notation for the functions
 $\frac1x \, u \, = \, \frac1x \, u(x,y)\, , \
 \frac{y}{x} \, u \, = \, \frac{y}{x} \, u(x,y)$. We estimate the two terms in the
 right hand side of~(\ref{eq.both-p}) separately, beginning with the the second one.

 The convex domain~$\Omega$ lies
between two its tangents to~$\Omega$ at the point~$(0,0)$.
Consequently, for every~$(x,y) \in \Omega$, we have $\varphi_1'(0) \le \frac{y}{x} \le \varphi_2'(0)$.
In the notation~$m = \max \{|\varphi_1'(0)|\, , \, |\varphi_2'(0)|\}$, we get
$\bigl|\frac{y}{x}\bigr| \, \le \, m$ and therefore,
\begin{equation}\label{eq.first-p}
\Bigl\|\, \frac{y}{x} \, u_y \, \Bigr\|\ \le \ m\, \|u_y\|\, .
\end{equation}
 To estimate the
first term~$\bigl\|\frac{1}{x} \, u \bigr\|$ we use the generalized Poincar\'e  inequality
 (see, e.g., \cite{FGR10, GGR18}): for an absolutely continuous function~$v$
 on~$[0,1]$ with derivative from~$L_p$ and with~$v(0) = v(1) = 0$, we have
\begin{equation}\label{eq.Poinc-p}
\int_{0}^1  |v(t)|^p \, dt \quad  \le \quad
C_p\, \int_{0}^1  |\dot v(t)|^p\, dt\, ,
\end{equation}
where $C_p$ is a fixed constant. The precise value of this constant is found in~\cite[Theorem 2.2]{FGR10}.
After the change of variables~$y= (1-t)\varphi_1(x) \, + \, t\varphi_2(x)$, we obtain
$$
\int_{\varphi_1(x)}^{\varphi_2(x)}  |v(y)|^p \, dy \quad \le \quad
C_p\, \left|  \varphi_2(x) - \varphi_1(x) \right|^p
\int_{\varphi_1(x)}^{\varphi_2(x)}  |\dot v(y)|^p \, dy\, .
$$
Since the function~$\varphi_2(x) - \varphi_1(x)$ is  concave in~$x$,   it follows that
$$
\varphi_2(x) \ - \ \varphi_1(x)\ \le \
 \bigl(\varphi_2'(0) - \varphi_1'(0)\bigr)\, x\, \ \le \ 2\,m\,x.
$$
Thus,
$$
\int_{\varphi_1(x)}^{\varphi_2(x)}  |v(y)|^p \, dy \quad \le \quad
C_p\, (2mx)^p
\int_{\varphi_1(x)}^{\varphi_2(x)}  | \dot v(y)|^p \, dy\, .
$$
We have
$$
\Bigl\| \, \frac1x \, u \, \Bigr\|^p \ = \
\int_{0}^{1}\, \frac{dx}{x^p}\, \int_{\varphi_1(x)}^{\varphi_2(x)} \,
u^p\, \, dy \quad \le \quad  \int_{0}^{1}\,
\,   \frac{dx}{x^p} \ C_p (2mx)^p \,
  \, \int_{\varphi_1(x)}^{\varphi_2(x)} \,
|u_y|^p\, \, dy \ = \
$$
$$
= \ C_p 2^{\,p} m^p\, \int_0^1 dx \, \int_{\varphi_1(x)}^{\varphi_2(x)} \,
|u_y|^p\, \, dy\quad  = \quad  C_p 2^{\,p} m^p \, \bigl\| \, u_y \, \bigr\|^p\, .
$$
Thus, $\bigl\| \, \frac1x \, u \, \bigr\| \, \le \,
2\, (C_p)^{1/p}\, m\,  \bigl\| \, u_y \, \bigr\|$.
Combining this with inequality~(\ref{eq.first-p}) and
substituting to~(\ref{eq.both-p}), we finally obtain
\begin{equation}\label{eq.final-p}
\bigl\| u_x \bigr\| \quad \le \quad
m \, \bigl( 2\, (C_p)^{1/p}\ + \ 1\bigr) \, \bigl\| u_y \bigr\|\, .
\end{equation}
Thus, if the domain~$\Omega$ has the left and the right
angular points~$(0,0)$ and~$(2,0)$ respectively, then
for every~$u\in \cU_{\,\Omega}$,
we have~$\frac{\|u_x\|_p}{\|u_y\|_p}\, \le \, M_p \, \, = \, m \, ( 2\, (C_p)^{1/p}\ + \ 1)$.
Therefore, $K_p(\Omega) \ \le \ M_p$, which completes the proof.

{\hfill $\Box$}
\smallskip

Equality~(\ref{eq.final-p}) estimates~$K_p(\Omega)$ for the domains
that have angular points~$(0,0)$ and~$(2,0)$.
For arbitrary domains~$\Omega$,  we apply formulas~(\ref{eq.change-p}) and obtain
\begin{theorem}\label{th.20-p}
If the domain~$\Omega$ has the left and right angular points~$(0,0)$ and~$(2,0)$,
then
$$
K_p(\Omega)\ \le \ M_p(\Omega)\ = \ m \, ( 2\, (C_p)^{1/p}\ + \ 1)\, ,
$$
where~$C_p$ is the constant in the generalized Poincar\'e inequality~(\ref{eq.Poinc-p}),
$m$ is the largest modulus of the four angle coefficients
of tangents at the  angular points~$(0,0)$ and~$(2,0)$.

For an arbitrary domain~$\Omega$, we have
$$
K_p(\Omega) \ \le \
\frac{2}{w_x}\, M_p(\tilde \Omega)\ + \  \frac{|\,c\,|}{w_x},
$$
where~$w_x$ is the length of the  projection of~$\Omega$
to the~$x$-axis,~$c$ is the difference of~$y$-coordinates of the left and right angular points,
$\tilde \Omega$ is the domain obtained from~$\Omega$ by the affine
transform~(\ref{eq.affine-p}).

\end{theorem}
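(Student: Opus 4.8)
The plan is to recognize that the first displayed bound is essentially a repackaging of inequality~(\ref{eq.final-p}), already established in the sufficiency direction of Theorem~\ref{th.10-p}, and that the second bound follows by feeding the first into the affine change-of-variables estimate~(\ref{eq.change-p}). Consequently the proof should require no genuinely new analytic input beyond Lemma~\ref{l.10-p} and the generalized Poincar\'e inequality~(\ref{eq.Poinc-p}); the task is to make the constant explicit and to combine the pieces correctly.

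First I would establish the bound for a domain with angular points at $(0,0)$ and $(2,0)$. In the proof of Theorem~\ref{th.10-p} the estimate $\|u_x\| \le m\,(2\,(C_p)^{1/p}+1)\,\|u_y\|$ was derived on the left half $\frac12\Omega$, with $m = \max\{|\varphi_1'(0)|,\,|\varphi_2'(0)|\}$ the larger modulus of the two tangent slopes at the left angular point. Running the symmetric argument from the right angular point $(2,0)$ yields the same estimate on the right half, now governed by $m' = \max\{|\varphi_1'(2)|,\,|\varphi_2'(2)|\}$. To combine the two halves under a single constant I would set $m$ to be the largest of all four slope moduli; then each half obeys the inequality with the common constant $M_p = m\,(2\,(C_p)^{1/p}+1)$. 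Adding the $p$-th power integrals $\int_{\mathrm{left}}|u_x|^p \le M_p^p\int_{\mathrm{left}}|u_y|^p$ and its right-half analogue gives $\int_\Omega |u_x|^p \le M_p^p \int_\Omega |u_y|^p$, hence $\|u_x\|_p \le M_p \|u_y\|_p$ and $K_p(\Omega) \le M_p(\Omega)$.

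For the second inequality I would pass from an arbitrary $\Omega$ to the normalized domain $\tilde\Omega$ through the transform~(\ref{eq.affine-p}), which places the extreme left and right points at $(0,0)$ and $(2,0)$, so the first part applies and yields $K_p(\tilde\Omega) \le M_p(\tilde\Omega)$. The estimate~(\ref{eq.change-p}) asserts, for every admissible $u$, that $\frac{\|u_x\|_p}{\|u_y\|_p} \le \frac{2}{w_x}\,\frac{\|u_{\tilde x}\|_p}{\|u_{\tilde y}\|_p} + \frac{|c|}{w_x}$. Taking the supremum over $u \in \cU_\Omega$ and bounding the supremum of the transformed ratio by $K_p(\tilde\Omega) \le M_p(\tilde\Omega)$ gives precisely $K_p(\Omega) \le \frac{2}{w_x}\,M_p(\tilde\Omega) + \frac{|c|}{w_x}$.

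The points requiring attention are bookkeeping rather than a genuine obstacle. One must check that the constant $m$, chosen as the maximum over all four tangent slopes, indeed dominates the local constants used separately on the two halves, so that a single $M_p$ serves throughout. One must also verify that the supremum over the original functions matches the supremum over their images under~(\ref{eq.affine-p}): since that affine map is a bijection of $\cU_\Omega$ onto $\cU_{\tilde\Omega}$ carrying nonzero functions to nonzero functions, the two suprema coincide, which is exactly what legitimizes replacing the transformed ratio by $K_p(\tilde\Omega)$. No estimate beyond those already proven is invoked.
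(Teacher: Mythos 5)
Your proposal is correct and follows essentially the same route as the paper, which states Theorem~\ref{th.20-p} as a direct consequence of the estimate~(\ref{eq.final-p}) obtained in the sufficiency part of Theorem~\ref{th.10-p} (run on both halves of~$\Omega$ with $m$ taken as the largest of the four slope moduli) combined with the change-of-variables inequality~(\ref{eq.change-p}). Your explicit bookkeeping---summing the $p$-th power integrals over the two halves and checking that the affine map identifies $\cU_\Omega$ with $\cU_{\tilde\Omega}$---is exactly the content the paper leaves implicit.
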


\begin{prob}\label{pr.10-p}
Find~$K_{p}(\Omega)$ for an arbitrary domain~$\Omega$.
\end{prob}
A natural question arises for which domain the constant~$K_p(\Omega)$ is minimal?
Since a linear contraction of~$\Omega$ with respect to the $x$-axis
enlarges~$K_p$, the problem has to be formulated after a normalization of the sizes of~$\Omega$
along the coordinate axes, for example, by the condition that~$\Omega$ has equal projections.
\begin{prob}\label{pr.12-p}
Which domain inscribed in a square with sides parallel to the coordinate axes has the minimal
constant~$K_{p}(\Omega)$?
\end{prob}
Note that for~$p>1$, neither the  circle nor other domains with smooth boundaries give the answer to Problem~\ref{pr.12-p}, since
by Theorem~\ref{th.10-p}, for such domains,~$K_{p} = \infty$.   On the other hand,
for~$p=1$, the circle is optimal (Example~\ref{ex.10-1}), however, there are many other optimal figures.

\bigskip

\begin{center}
\large{\textbf{4. The case~$\ \bp \ \mathbf{= \ \infty}$}}
\end{center}
\medskip

This remaining case admits an explicit expression for~$K_{p}(\Omega)$. A simple formula for~$K_{\infty}(\Omega)$
 is provided by Theorem~\ref{th.p=infty} below. Note that for all~$p \in [1, +\infty)$, we do not have formulas but  only estimates for~$K_p$.

Let $\Om$ be bounded below and above by the graphs of functions $\vphi_1(x)$ and $\vphi_2(x)$, $x \in [a,\, b]$, respectively. Thus, $\vphi_1$ is convex and $\vphi_2$ is concave.
Thus,  $\Om = \{ (x,y) : x \in [a,\, b],\ \vphi_1(x) \le y \le \vphi_2(x) \}$. Denote as above  $m = \max \{ |\vphi_i'(x)| :\, i \in \{1,2\},\ x \in [a,\, b] \}.$ Clearly, the maximum is attained at one of the points $x = a$ or $x=b$. If both vertical lines of support are angular then $m < \infty$ and $\vphi_1(a) = \vphi_2(a),\ \vphi_1(b) = \vphi_2(b)$. If, otherwise, one of the vertical lines of support is not angular, then either $m = \infty$ or one of the differences $\vphi_2(a) -\vphi_1(a),\ \vphi_2(b) - \vphi_1(b)$ is positive.

If the intersection of $\Om$ with the line $y=y_0$ is a non-degenerate interval and $(x_0,y_0)$ is one of its endpoints, we let the derivative $u_x$ at $(x_0,y_0)$ to be the one-sided derivative $\frac{d}{dx}\Big\rfloor_{x=x_0} u(x,y_0)$. In the similar way we define the derivative $u_y$ at the boundary points $(x,\vphi_i(x)), \ i \in \{1,2\},\ x \in (a,\, b)$.

\begin{theorem}\label{th.p=infty}
We have $K_\infty = m < \infty$, if both vertical lines of support are angular, and $K_\infty = \infty$ otherwise.
\end{theorem}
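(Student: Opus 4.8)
The plan is to establish the two bounds $K_\infty \le m$ and $K_\infty \ge m$ in the angular case, and to treat the non-angular case by direct constructions giving $K_\infty = \infty$. The dichotomy between $m<\infty$ and $m=\infty$, together with the presence or absence of a vertical boundary segment, is exactly the angular/non-angular trichotomy recorded just before the statement and already exploited in Theorem~\ref{th.10-p}, so the geometric bookkeeping is in place.

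For the upper bound the idea is to reduce the essential supremum of $|u_x|$ to the boundary. Fixing a height $y_0$, the univariate function $u(\cdot,y_0)$ is concave on its slice $[a(y_0),b(y_0)]$ and vanishes at the endpoints, so exactly as in Lemma~\ref{l.10-1} its derivative $u_x(\cdot,y_0)$ is non-increasing; hence $|u_x(x,y_0)|$ is maximised at one of the two endpoints, which lie on $\partial\Omega$. Therefore the essential supremum of $|u_x|$ over $\Omega$ equals its essential supremum over the left and right boundary arcs. At a boundary point lying on a graph $y=\varphi_i(x)$ at which $\varphi_i$ is differentiable, the condition $u\rfloor_{\partial\Omega}=0$ forces the tangential derivative of $u$ to vanish, i.e.\ $u_x+\varphi_i'(x)\,u_y=0$, so $|u_x|=|\varphi_i'(x)|\,|u_y|\le m\,|u_y|\le m\,\|u_y\|_\infty$. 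Here I use that $\varphi_1$ is convex and $\varphi_2$ concave, so each $|\varphi_i'|$ is monotone and attains its maximum $m$ at an endpoint of $[a,b]$, whence $|\varphi_i'|\le m$ throughout. Combining the two facts gives $\|u_x\|_\infty\le m\,\|u_y\|_\infty$ and thus $K_\infty\le m$.

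For the lower bound I would exhibit an extremal \emph{tent} function concentrated at a point of maximal slope. Let $P$ be an endpoint of $[a,b]$ at which $|\varphi_i'(P)|=m$, and let $\ell$ be the affine function vanishing on the tangent line to $\varphi_i$ at $P$ (an extreme support line of $\Omega$ at the corner $P$) and positive inside $\Omega$; its gradient is normal to that line, so $|\ell_x|/|\ell_y|=m$. Taking $u=\min(\lambda\ell,\,v)$ with $v\in\cU_\Omega$ a fixed function of small gradient and $\lambda$ large, one gets a concave function vanishing on $\partial\Omega$ (since $v=0\le\lambda\ell$ there) which equals $\lambda\ell$ only on a thin sliver adjacent to the steep face and equals $v$ elsewhere; on the sliver $|u_x|/|u_y|=m$, and for $\lambda$ large the sliver furnishes the global maxima of both $|u_x|$ and $|u_y|$, so that $\|u_x\|_\infty/\|u_y\|_\infty=m$ exactly. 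Hence the supremum is attained and $K_\infty\ge m$, giving $K_\infty=m$. For the non-angular case, either $m=\infty$, in which case the same tent construction at boundary points of ever larger slope yields ratios tending to $\infty$; or $\Omega$ has a vertical boundary segment, say at $x=a$, and a function rising steeply in $x$ off that segment has $\|u_x\|_\infty=\infty$ while $\|u_y\|_\infty$ stays bounded. In both situations $K_\infty=\infty$.

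The main obstacle is the upper-bound reduction to the boundary: one must justify, with the one-sided derivative conventions fixed in the preamble, both the monotonicity identity expressing $\|u_x\|_\infty$ as an essential supremum over the left/right boundary arcs, and the tangential relation $u_x=-\varphi_i'\,u_y$ at almost every boundary point. This requires handling the at most countably many corner points of $\partial\Omega$ and the possible non-differentiability of $\varphi_i$ on a null set, but these are negligible for the essential supremum.
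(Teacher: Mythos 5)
Your upper bound $K_\infty\le m$ is sound and is essentially the paper's own argument: monotonicity of the concave slice function $u(\cdot,y_0)$ reduces the essential supremum of $|u_x|$ to the boundary (this is the paper's Lemma~\ref{l.ess}, which also handles the needed exclusion of the extreme points $A,B$ and the approximation of one-sided boundary derivatives by interior regular points), and the vanishing of the tangential derivative along $\partial\Omega$ gives $|u_x|\le|\varphi_i'|\,|u_y|\le m\,|u_y|$ there (the paper's Lemma~\ref{l.tan}, proved via support planes so that corner points of $\varphi_i$ cause no trouble).

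The lower bound, however, has a genuine gap: the tent construction $u=\min(\lambda\ell,v)$ collapses. Let $L=\|\,|\nabla v|\,\|_\infty$ be the Lipschitz constant of your fixed $v$. Since the line $\{\ell=0\}$ supports $\Omega$ at $P$, for every $Q\in\Omega$ the segment from $Q$ to the foot of the perpendicular on that line exits $\Omega$, so $\mathrm{dist}(Q,\{\ell=0\})\ge\mathrm{dist}(Q,\partial\Omega)$; hence $\lambda\ell(Q)\ge\lambda\sqrt{m^2+1}\;\mathrm{dist}(Q,\partial\Omega)$, while $v(Q)\le L\,\mathrm{dist}(Q,\partial\Omega)$. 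Thus as soon as $\lambda\ge L/\sqrt{m^2+1}$ one has $\lambda\ell\ge v$ on all of $\Omega$: for large $\lambda$ the ``sliver'' $\{\lambda\ell<v\}$ is \emph{empty} and $u\equiv v$, so the construction produces nothing new. The tension is unavoidable, not an artifact of my estimate: by the upper bound already proved, $\|v_x\|_\infty\le m\|v_y\|_\infty$, so $L\le\sqrt{\|v_x\|_\infty^2+\|v_y\|_\infty^2}\le\sqrt{m^2+1}\,\|v_y\|_\infty$; consequently a nonempty sliver forces $\lambda<\|v_y\|_\infty$, i.e.\ the value $|u_y|=\lambda$ on the sliver is \emph{strictly below} $\|v_y\|_\infty$, and the sliver can carry the global maximum of $|u_y|$ only if all near-maximal values of $|v_y|$ happen to fall inside it --- which you neither arrange nor can arrange with a generic small-gradient $v$. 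This also invalidates your claim that the supremum is \emph{attained} (the paper never claims attainment; when $\varphi_2$ is strictly concave the extreme support line touches $\Omega$ only at $P$, and attainment is doubtful), and it undermines case (i) of the non-angular part, which you reduce to the same construction. The paper's route avoids all this: it takes $u_\omega$, the smallest concave function vanishing on $\partial\Omega$ and equal to $1$ at the point $(x_0,y_0-\omega)$ just inside a regular boundary point $(x_0,y_0)$ of the arc where $\varphi_2'\in[m-\ve,m]$; as $\omega\to 0$ both norms blow up like $1/\omega$ and, by Lemma~\ref{l.ess}, are attained on that arc, where Lemma~\ref{l.tan} pins the pointwise ratio inside $[m-\ve,m]$, whence $K_\infty\ge m-\ve$ for every $\ve$. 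Finally, in case (ii) your phrase ``has $\|u_x\|_\infty=\infty$'' describes an inadmissible function (members of $\cU_\Omega$ have bounded gradient); one needs a family, e.g.\ the paper's peaks at $(a+\omega,y_0)$, for which $\|u_x\|_\infty=1/\omega\to\infty$ while $\|u_y\|_\infty$ stays bounded.
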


Note that it may happen that one or two vertical lines of support are not angular, while $m < \infty$; see Fig.~\ref{fig:lim}\,(b). In this case, obviously, $K_\infty = \infty \ne m$.

\noindent {\tt Proof} is based on the following two lemmas.

\begin{lemma}\label{l.tan}
Assume that $a < x_0 < b$ and $y_0 = \vphi_i(x_0)$, $i \in \{ 1,2 \}$. In other words, the point $(x_0,y_0)$ lies on the graph of the function $\vphi_i$, but does not coincide with one of its endpoints.  Then
$$
-\frac{u_x(x_0,y_0)}{u_y(x_0,y_0)} \ \in \ [ \vphi_i'(x_0-0), \ \vphi_i'(x_0+0)].
$$
\end{lemma}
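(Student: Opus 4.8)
The plan is to reduce everything to a single elementary fact about a supporting plane of the graph of the concave function $u$, which is exactly the device already used in the proof of Lemma~\ref{l.10-p}. First I would record two preliminaries. Since $u$ is concave and vanishes on $\partial\Omega$, its minimum over the convex compact set $\Omega$ is attained at an extreme point, hence $u\ge 0$ on $\Omega$. Consequently, at a point $(x_0,y_0)$ on the graph of the lower boundary $\varphi_1$ the vertical cross-section enters $\Omega$ upward, so the (upward) one-sided derivative satisfies $u_y(x_0,y_0)\ge 0$; the borderline value $u_y=0$ corresponds to a horizontal supporting plane at a zero of $u$ and, together with $u\ge 0$, to $u\equiv 0$, a degenerate case I discard. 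It therefore suffices to treat $i=1$ with $u_y>0$; the case $i=2$ is identical after the reflection $y\mapsto -y$, the only change being that $\varphi_2$ is concave, so $\varphi_2'$ is non-increasing and the bracket $[\varphi_2'(x_0-),\varphi_2'(x_0+)]$ is read as the segment between its endpoints.

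The core step uses that, $u$ being concave, any supporting plane to its graph at $(x_0,y_0,0)$ lies above the graph. Writing a supergradient as $(u_x,u_y)$, the plane $z=u_x(x'-x_0)+u_y(y'-y_0)$ satisfies $z\ge u(x',y')$ for every $(x',y')\in\Omega$. Evaluating at the boundary points $(x',\varphi_1(x'))$, where $u=0$, yields
\[
u_x\,(x'-x_0)+u_y\bigl(\varphi_1(x')-\varphi_1(x_0)\bigr)\ \ge\ 0 \qquad\text{for all } x'\in[a,b].
\]
Dividing by $x'-x_0$ (which changes the sign of the inequality when $x'<x_0$) and letting $x'\to x_0^{+}$ and then $x'\to x_0^{-}$ produces the two one-sided inequalities $u_x+\varphi_1'(x_0+)\,u_y\ge 0$ and $u_x+\varphi_1'(x_0-)\,u_y\le 0$. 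Dividing by $u_y>0$ gives exactly $\varphi_1'(x_0-)\le -u_x/u_y\le\varphi_1'(x_0+)$, which is the claim. Geometrically this says that the supergradient lies in the dual of the tangent cone to $\Omega$ at $(x_0,y_0)$, i.e. $\nabla u$ points along an inner normal squeezed between the one-sided normals of the boundary corner.

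The delicate point, and the main obstacle, is that Lemma~\ref{l.tan} concerns the \emph{specific} one-sided derivatives $u_x,u_y$ fixed by the definition preceding it, and at a corner of $\varphi_1$ (or a non-differentiability point of $u$) the pair $(u_x,u_y)$ need not be a single supergradient: the upward derivative equals $u_y=\min_{g}g_2$ while the horizontal one equals $u_x=\min_{g}g_1$ or $u_x=\max_{g}g_1$, according to whether the horizontal cross-section opens to the right or to the left of $(x_0,y_0)$ — which is governed by the sign of $\varphi_1'$ at $x_0$. I would therefore apply the displayed inequality to every genuine supergradient $g\in\partial u(x_0,y_0)$, obtaining $\varphi_1'(x_0-)\le -g_1/g_2\le\varphi_1'(x_0+)$ for all of them (equivalently $\partial u\subseteq T^{*}$), and then combine these bounds with the min/max representation of the one-sided derivatives to recover the ratio estimate for the specific pair $(u_x,u_y)$. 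This matching of the one-sided derivatives to the extreme supergradients, together with the degenerate situation in which the horizontal cross-section collapses to a point (the extreme low or high point, where $u_x$ is obtained only as a limit), is the genuine technical work; the monotonicity of $\varphi_1'$ reduces it to the two routine cases $\varphi_1'\le 0$ and $\varphi_1'\ge 0$ near $x_0$.
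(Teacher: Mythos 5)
Your proof is correct, and its core mechanism --- evaluating a plane of support of the graph of $u$ at $(x_0,y_0,0)$ against boundary points $(x',\varphi_i(x'))$, where $u$ vanishes, and taking one-sided limits in $x'$ --- is the same as the paper's. The difference is the ``delicate point'' you flag, and it is a real one: the paper's proof simply declares that $z=u_x(x_0,y_0)(x-x_0)+u_y(x_0,y_0)(y-y_0)$, formed from the one-sided derivatives, is a plane of support, and this is false in general. Concretely, let the lower boundary have a corner at the origin with $\varphi_1'(0-)=-2$, $\varphi_1'(0+)=-1$, and let $u\in\mathcal{U}_{\Omega}$ agree near the origin with $\min\{2x+y,\ \frac{3}{2}(x+y)\}$ (a minimum of affine functions vanishing on the edges of a suitable quadrilateral). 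Then $u_x=\frac{3}{2}$ and $u_y=1$, yet $u(t,t)=3t>\frac{5}{2}t$ for small $t>0$, so the plane $z=\frac{3}{2}x+y$ lies strictly below the graph along the diagonal: the pair of one-sided derivatives mixes two distinct supergradients and is itself not a supergradient, although the lemma's conclusion still holds ($-\frac{3}{2}\in[-2,-1]$). Your repair is exactly what is needed to close this gap: the displayed inequality proves $\varphi_i'(x_0-0)\le -g_1/g_2\le\varphi_i'(x_0+0)$ for \emph{every} $g=(g_1,g_2)\in\partial u(x_0,y_0)$ (this also forces $g_2>0$ at a genuine corner, unless $u\equiv 0$), and the passage to $(u_x,u_y)$ then goes through: for a cross-section opening rightward, $u_x=\inf_g g_1\ \ge\ \inf_g\bigl(-\varphi_1'(x_0+0)\,g_2\bigr)=-\varphi_1'(x_0+0)\,u_y$, while a sequence $g^n$ with $g_2^n\to\inf_g g_2=u_y$ gives $u_x\le -\varphi_1'(x_0-0)\,u_y$; the identities $u_y=\inf_g g_2$ and $u_x=\inf_g g_1$ (resp.\ $\sup_g g_1$ when the section opens leftward) are legitimate because the corresponding directions lie in the interior of the tangent cone of $\Omega$ at $(x_0,y_0)$ and $\partial u(x_0,y_0)\neq\emptyset$ by the Lipschitz bound on $\nabla u$. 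In short: at regular boundary points $\partial u$ is contained in the inward normal ray, the one-sided derivatives do come from a single supergradient, and the two arguments coincide; at corners of $\varphi_i$ --- precisely where the interval in the lemma is nondegenerate --- the paper's one-line justification breaks down and your superdifferential argument is what makes the proof rigorous.
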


\noindent {\tt Proof}.
The equation $z = u_x(x_0,y_0) (x-x_0) + u_y(x_0,y_0) (y-y_0)$  determines a plane of support to the graph of $u$ at $(x_0,y_0,0)$. The line of intersection of this plane with the horizontal plane $z=0$ is given by $u_x(x_0,y_0) (x-x_0) + u_y(x_0,y_0) (y-y_0) = 0$, consequently
$$
-\frac{u_x(x_0,y_0)}{u_y(x_0,y_0)}\ =\ \frac{y-y_0}{x-x_0}.
$$
This line lies outside the domain $\Om$, hence it is supporting to the graph of $\vphi_i$ in the horizontal plane, and so, $\frac{y-y_0}{x-x_0}  \in [ \vphi_i'(x_0-0), \ \vphi_i'(x_0+0)]$.

{\hfill $\Box$}
\smallskip

\begin{lemma}\label{l.ess}
The value $\| u_x \|_\infty \, = \, {\rm ess \,sup}_{(x,y)\in\Om} |u_x(x,y)|$ is attained on the boundary of $\Om$, that is,
$$
\| u_x \|_\infty \ = \ \sup_{(x,y)\in\pl\Om} |u_x(x,y)|.
$$
The same formula is true when $u_x$ is substituted with $u_y$.

Assume that $\vphi_1(a) = \vphi_2(a)$ and $\vphi_1(b) = \vphi_2(b)$, and denote by $A$ and $B$ the left and right extreme points of $\Om$, that is, $A = (a,\vphi_1(a))$ and $B = (b, \vphi_1(b))$; then $\| u_x \|_\infty$ is attained on $\Om \setminus \{ A \cup B \}$, that is,
$$
\| u_x \|_\infty \ = \ \sup_{(x,y)\in\pl\Om \setminus \{ A \cup B \}} |u_x(x,y)|.
$$
\end{lemma}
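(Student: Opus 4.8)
The plan is to reduce the two-dimensional essential supremum to the one-dimensional concavity of the restrictions of $u$ to the cross-sections of $\Om$.

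For the first assertion, fix a value $y$ for which the horizontal section $\{x:(x,y)\in\Om\}$ is a non-degenerate interval $[a,b]$, $a=a(y)$, $b=b(y)$ (this fails only for the at most two heights corresponding to the lowest and highest points of $\Om$). The function $x\mapsto u(x,y)$ is concave and vanishes at $a$ and $b$, hence it is nonnegative, its derivative $u_x(\cdot,y)$ is non-increasing, and it passes from nonnegative to nonpositive values. Therefore $u_x(\cdot,y)$ attains its largest value at the left end and its smallest value at the right end, so that
$$
\mathop{\mathrm{ess\,sup}}\limits_{x}\,|u_x(x,y)|\;=\;\max\bigl\{\,u_x(a+0,y),\ -u_x(b-0,y)\,\bigr\},
$$
which is the modulus of a one-sided derivative at the boundary point $(a,y)$ or $(b,y)$. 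Since $\|u_x\|_\infty$ is the essential supremum over $y$ of the left-hand side, and since the boundary one-sided derivative is upper semicontinuous in $y$ (so that its essential supremum coincides with its supremum), we obtain $\|u_x\|_\infty=\sup_{\pl\Om}|u_x|$. The statement for $u_y$ is identical after interchanging the roles of the axes and slicing $\Om$ by the vertical lines $x=\mathrm{const}$, on which $y\mapsto u(x,y)$ is concave and vanishes at $\vphi_1(x),\vphi_2(x)$.

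For the refined assertion we invoke the angular hypothesis $\vphi_1(a)=\vphi_2(a)$ and $\vphi_1(b)=\vphi_2(b)$: it means that the extreme vertical support lines meet $\Om$ only at the single points $A$ and $B$, so $\pl\Om$ carries no vertical segment and is exactly the union of the graphs $y=\vphi_1(x)$ and $y=\vphi_2(x)$, $x\in[a,b]$, which meet only at $A$ and $B$. By the first part it remains to check that $|u_x|$ at the two corner points does not exceed $\sup_{\pl\Om\setminus\{A,B\}}|u_x|$. The value $u_x$ at $A$ is the right derivative of $u$ along the horizontal chord through $A$; because this chord shrinks to $A$ as its height tends to that of $A$, one approximates $A$ by boundary points $(x,\vphi_i(x))$ with $x\in(a,b)$ tending to $a$ and compares the corresponding one-sided derivatives. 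Using that the corner slopes $\vphi_i'(a+0)$ are finite (this is precisely the meaning of ``angular'', i.e.\ $m<\infty$), one shows that the boundary values of $|u_x|$ near $A$ approach the value at $A$, so that deleting $A$---and, symmetrically, $B$---does not lower the supremum.

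The decisive step is this last one: controlling the one-sided boundary derivative $u_x$ as the extreme points are approached, that is, excluding an upward jump of $u_x$ localized exactly at $A$ or $B$. This is exactly where the angular assumption is indispensable: if a vertical support line were not angular, there would be a vertical boundary segment along which $u$ vanishes while the transversal slope $u_x$ is unconstrained, and the construction in the necessity part of Theorem~\ref{th.10-p} produces functions with $|u_x|$ arbitrarily large near that segment, so that the supremum would be concentrated at the excluded point and the assertion would fail. I expect the rigorous argument to rest on a semicontinuity estimate for the supergradient of the concave function $u$ at $A$: the $x$-slopes of the supporting planes at $A$ are bounded by those at neighbouring boundary points because the tangent cone at $A$ has finite opening.
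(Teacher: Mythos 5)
Your slicing step --- on each horizontal chord $x\mapsto u(x,y)$ is concave and vanishes at the endpoints, so $u_x(\cdot,y)$ is non-increasing and $|u_x|$ is largest at the chord's endpoints --- is exactly the paper's key step. The gaps are in the two bridges you build around it. First, to pass from the supremum of the endpoint derivatives to the \emph{essential} supremum you assert that the endpoint derivative is upper semicontinuous in $y$, ``so that its essential supremum coincides with its supremum.'' Both halves of this fail. Upper semicontinuity is precisely the property that \emph{permits} an upward spike on a null set: the indicator function of a single point is upper semicontinuous, with $\sup=1$ and ${\rm ess\,sup}=0$; it is \emph{lower} semicontinuity that would give $\sup\le{\rm ess\,sup}$. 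And the endpoint derivative is not upper semicontinuous anyway: for $\Om=[0,1]^2$ and $u=\min\{x,\,1-x,\,y,\,1-y\}$ the left-endpoint derivative equals $1$ for every $y\in(0,1)$ and $0$ at $y=0$. The paper fills exactly this hole with formula (\ref{esssup}): every value of $u_x$ at a point of $\Om'$ is a limit of values of $u_x$ at nearby regular (differentiability) points of the concave function, and near each regular point $u_x$ stays close to that value on a set of positive measure. This approximation argument (equivalently, a statement about supergradients of concave functions) is the missing ingredient; your semicontinuity claim cannot substitute for it.

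Second, your treatment of the refined claim (deleting $A$ and $B$) is unsound as written. You propose to show that the boundary values of $|u_x|$ near $A$ \emph{approach} the value at $A$; that is false. On the kite with vertices $(0,0),(1,1),(2,0),(1,-1)$, take $u=\min\{c_1(x-y),\,c_2(x+y),\,2-x-y,\,2-x+y\}$ with $0<c_2<c_1$: the boundary derivative equals $c_1$ along the upper edge near $A=(0,0)$, equals $c_2$ along the lower edge, and equals $\min\{c_1,c_2\}=c_2$ at $A$ itself. Only the one-sided inequality (value at $A$ $\le$ $\limsup$ of nearby boundary values) survives, and your sketch never establishes it. Moreover, you invoke finiteness of the corner slopes, calling it ``precisely the meaning of angular, i.e.\ $m<\infty$'' --- but the lemma's hypothesis is only $\vphi_1(a)=\vphi_2(a)$ and $\vphi_1(b)=\vphi_2(b)$, which does not imply $m<\infty$ (the upper half-disc satisfies it with $m=\infty$), and the paper applies this very part of the lemma in case (i) of the proof of Theorem~\ref{th.p=infty}, where $m=\infty$. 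The paper's route sidesteps both difficulties: it never inspects the values at $A$ and $B$ at all, but removes the entire horizontal lines through $A$ and $B$ to form $\Om'$, proves ${\rm ess\,sup}_{\Om}|u_x|=\sup_{\Om'}|u_x|$ by the regular-point approximation, and then dominates the value at each regular point of $\Om'$ by the value at an endpoint of its chord, which lies in $\pl\Om\setminus\{A\cup B\}$ automatically because its height differs from those of $A$ and $B$. To salvage your structure you would need (a) the approximation argument in place of the semicontinuity claim, and (b) a proof of the one-sided inequality at $A$, $B$ that does not assume $m<\infty$.
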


\begin{proof}
Let us first show that
\beq\label{esssup}
{\rm ess \,sup}_{(x,y)\in\Om} |u_x(x,y)| \ = \ \sup_{(x,y)\in\Om'} |u_x(x,y)|,
\eeq
where $\Om'$ is obtained by taking away the horizontal lines through $A$ and $B$ from $\Om$, if $\vphi_1(a) = \vphi_2(a)$ and $\vphi_1(b) = \vphi_2(b)$, and $\Om' = \Om$ otherwise.
It suffices to arrange, for all $\ve > 0$ and $(x,y) \in \Om'$, a set of points in $\Om$ with positive measure for which $u_x$ lies in the $\ve$-neighborhood of $u_x(x,y)$.

Indeed,  there exists a regular point $(x', y')$ arbitrarily close to $(x, y)$ with $u_x(x',y')$ arbitrarily close to $u_x(x,y)$.
In turn, there is a neighborhood of $(x', y')$ such that in all its regular points
$u_x$ is sufficiently close to $u_x(x',y')$.
Formula \eqref{esssup} is proved.

It remains to show that $\sup_{(x,y)\in\Om} |u_x(x,y)|$ is attained on $\pl\Om$, that is,
$$
\sup_{(x,y)\in\Om} |u_x(x,y)| = \sup_{(x,y)\in\pl\Om} |u_x(x,y)|,
$$
and if $\vphi_1(a) = \vphi_2(a)$ and $\vphi_1(b) = \vphi_2(b)$ then
$$
\sup_{(x,y)\in\Om} |u_x(x,y)| = \sup_{(x,y)\in\pl\Om\setminus\{ A\cup B\}} |u_x(x,y)|.
$$
Indeed, take a regular point $(x_0,y_0)$ in $\Om'$; at one of the endpoints of the segment $\Om \cap \{ y=y_0 \}$ (let it be $(x_1,y_0)$) it holds $|u_x(x_1,y_0)| \ge |u_x(x_0,y_0)|$.

This finishes the proof of Lemma \ref{l.ess} for $u_x$. The proof for $u_y$ is similar.
\end{proof}

Let us now prove Theorem \ref{th.p=infty}. First consider the case  when both vertical lines of support to $\Om$ are angular, and therefore,  $m < \infty$. According to Lemma \ref{l.tan}, for any $(x,y)$ on $\pl\Om$ one has
$$
\frac{|u_x(x,y)|}{|u_y(x,y)|} \le m.
$$
Substituting, in place of $(x,y)$, a sequence $(x_i,y_i) \in \pl\Om \setminus \{ A \cup B \}$ such that $|u_x(x_i,y_i)|$ tends to $ \| u_x \|_\infty = {\rm ess \,sup}_{(x,y)\in\Om} |u_x(x,y)|$, and taking into account that $|u_y(x_i,y_i)| \le \| u_y \|_\infty$, one obtains
$$
m \ge \frac{|u_x(x_i,y_i)|}{|u_y(x_i,y_i)|} \ge \frac{|u_x(x_i,y_i)|}{\|u_y\|_\infty} \to \frac{\|u_x\|_\infty}{\|u_y\|_\infty} \quad \text{as} \ \ i \to \infty.
$$

It remains to find functions $u$ with $\frac{\|u_x\|_\infty}{\|u_y\|_\infty}$ arbitrarily close to $m$. Assume without loss of generality that $\vphi_2'(a) = m.$ Fix $\ve > 0$ and choose $\del > 0$ sufficiently small, so as for all $x \in [a,\, a+\del]$, \  $\vphi'_2(x) \in [m-\ve,\, m]$. Take a regular point $(x_0, y_0)$ on the graph of $\vphi_2\rfloor_{[a,a+\del]}$ and denote by $u = u_{\om}$ the smallest concave function satisfying $u\rfloor_{\pl\Om} = 0$ and $u(x_0, y_0-\om) = 1;$ see Fig.~\ref{fig:lim}\,(a). In what follows we assume that $0 < \om \le \om_0$ with $\om_0$ sufficiently small, so as $(x_0, y_0-\om_0)$ is an interior point of $\Om$, and consider the limit $\om \to 0$.

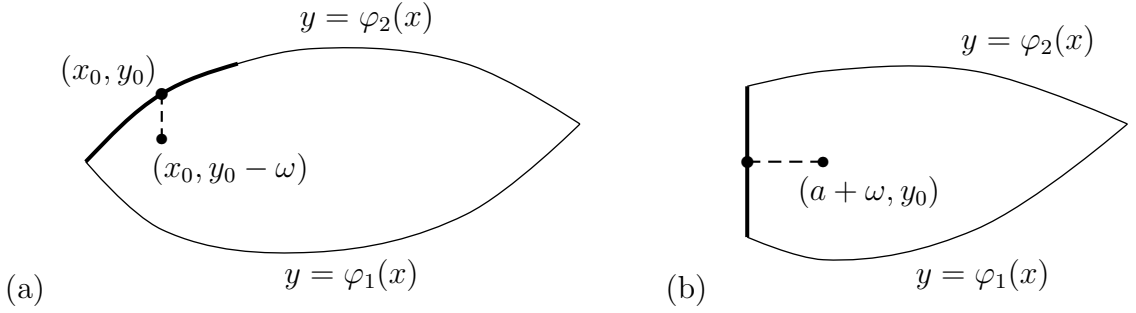
\begin{figure}[h]
\begin{picture}(0,130)
 \scalebox{1}{

 \rput(1.3,2.2){
 \pscurve[linewidth=0.5pt](0,0)(1,0.9)(2,1.3)(3,1.5)(5,1.3)(6.5,0.5)   \psecurve[linewidth=1.5pt](-1,-1.2)(0,0)(1,0.9)(2,1.3)(3,1.5)
 \pscurve[linewidth=0.5pt](0,0)(1,-0.9)(3,-1.2)(5,-0.7)(6.5,0.5)
 \psdots[dotsize=4.5pt](1,0.9)  \psdots[dotsize=4.0pt](1,0.3)
 \psline[linestyle=dashed](1,0.3)(1,0.9)    \rput(0.3,1.2){$(x_0,y_0)$}  \rput(1.9,-0.1){$(x_0,y_0-\om)$}
 \rput(3.7,1.9){$y = \vphi_2(x)$} \rput(3.5,-1.5){$y = \vphi_1(x)$}
  \rput(-0.8,-1.7){(a)}
}

\rput(10,2.2){
\pscurve[linewidth=0.5pt](0,1)(1,1.2)(3,1.2)(5,0.5) \pscurve[linewidth=0.5pt](0,-1)(1,-1.3)(3,-0.9)(5,0.5)
\psline[linewidth=1.5pt](0,1)(0,-1)  \psline[linestyle=dashed](0,0)(1,0)
\psdots[dotsize=4.5pt](0,0)  \psdots[dotsize=4.0pt](1,0) \rput(1.6,-0.4){$(a+\om,y_0)$}
\rput(3.7,1.6){$y = \vphi_2(x)$} \rput(3.1,-1.5){$y = \vphi_1(x)$}
\rput(-0.8,-1.7){(b)}
}

}
\end{picture}
\caption{In figure (a), $\pl\Omega$ does not contain vertical segments, while in figure (b), $\pl\Omega$ contains the vertical segment $x=a, \ y \in [\vphi_1(a),\, \vphi_2(a)]$.}
\label{fig:lim}
\end{figure}

The gradient $\nabla u_\om(x,y)$, with $(x,y)$ on the graphs of $\vphi_1$ and $\vphi_2\big\rfloor_{[a+\del,b]}$ and $\om \in (0,\, \om_0]$, is uniformly bounded. On the other hand, for all regular points $(x,y)$ on the graph of $\vphi_2\big\rfloor_{[a,a+\del]}$, according to Lemma \ref{l.tan}, one has $-\frac{u_x(x,y)}{u_y(x,y)} \in [m-\ve,\, m]$ and in particular, $-\frac{u_x(x_0, y_0)}{u_y(x_0, y_0)} \in [m-\ve,\, m]$ and
$$
-1 = u(x_0, y_0) - u(x_0, y_0-\om) = \om\, u_y(x_0, y_0),
$$
hence $u_y(x_0, y_0) = -1/\om$ and $u_x(x_0, y_0) \in \frac{1}{\om} [m-\ve,\, m]$.  It follows that as $\om \to 0$, the norms $\| u_x \|_\infty$ and $\| u_y \|_\infty$ tend to infinity and, according to Lemma \ref{l.ess}, both these values are attained on the graph of $\vphi_2\big\rfloor_{(a,a+\del]}$. It follows that the ratio
$\frac{\|u_x\|_\infty}{\|u_y\|_\infty}$ belongs to $[m-\ve,\, m].$

Now assume that one of the vertical lines of support to $\Om$ is not angular. We are going to prove that $K_\infty = \infty$. Consider two cases: \ (i) $m = \infty$ and \ (ii) one of the differences $\vphi_2(a) -\vphi_1(a),\ \vphi_2(b) - \vphi_1(b)$ is positive.

(i) Assume without loss of generality that $\vphi_2'(a) = +\infty.$ Fix $\ve > 0$ and choose $\del > 0$ sufficiently small, so as for all $x \in (a,\, a+\del]$, \  $\vphi'_2(x) \in [1/\ve,\, +\infty)$. Take a regular point $(x_0, y_0)$ on the graph of $\vphi_2\rfloor_{[a,a+\del]}$ and denote by $u = u_{\om}$ the smallest concave function satisfying $u\rfloor_{\pl\Om} = 0$ and $u(x_0, y_0-\om) = 1$. Repeating the above argument, one concludes that for $\om$ small enough, $\frac{\|u_x\|_\infty}{\|u_y\|_\infty} \ge 1/\ve.$
It follows that $K_\infty = \infty$.

(ii) Suppose, without loss of generality, that $\vphi_2(a) - \vphi_1(a) > 0.$ Take $y_0 \in (\vphi_1(a),\, \vphi_2(a))$ and denote by $u = u_{\om}$ the smallest concave function satisfying $u\rfloor_{\pl\Om} = 0$ and $u(a+\om, y_0) = 1;$ see Fig.~\ref{fig:lim}\,(b). In what follows we assume that $0 < \om \le \om_0$ with $\om_0$ sufficiently small, so as $(a+\om_0, y_0)$ is an interior point of $\Om$.

When $(x,y)$ is on $\pl\Om \cap \{ x > a  \}$ and $\om \in (0,\, \om_0]$,\ $\nabla u_\om(x,y)$ is uniformly bounded. On the other hand, for all points $(a,y)$ on the segment $x=a, \ y \in [\vphi_1(a),\, \vphi_2(a)]$ one has $u_x(a,y) = 1/\om$ and $u_y(a,y) = 0$. It follows that $\|u_x\|_\infty = 1/\om \to \infty$ as $\om \to 0$ and $\|u_y\|_\infty$ is bounded, hence $K_\infty = \infty$.

Theorem \ref{th.p=infty} is proved.

{\hfill $\Box$}
\smallskip

\begin{remark}\label{r.infty}
In the case $p = \infty$ one easily determines the solution of Minimax problem \ref{pr.minimax}. Indeed,
$$
 \frac{K_\infty(\Omega)}{k_\infty(\Omega)} \ = \ \sup_{\stackrel{u\in\cU_\Om}{u\ne 0}} \ \frac{\|u_x\|_\infty}{\|u_y\|_\infty} \ \
\sup_{\stackrel{u\in\cU_\Om}{u\ne 0}} \ \frac{\|u_y\|_\infty}{\|u_x\|_\infty}
\ = \ \sup \Big| \frac{dy}{dx} \Big| \ \sup \Big| \frac{dx}{dy} \Big|\ = \ \frac{\sup \big| \frac{dy}{dx} \big|}{\inf \big| \frac{dy}{dx} \big|},
$$
where the suprema and the infimum are taken over all graphs of functions forming $\pl\Om$. Clearly, the minimum of the latter quantity is attained when $\Big| \frac{dy}{dx} \Big|$ is constant, that is, $\Om$ is a parallelogram with equal slopes of the sides. Thus,
$$\inf_\Omega\, \frac{K_\infty(\Omega)}{k_\infty(\Omega)} = 1.$$
In a similar way, the more general relation can be derived for nonzero $\bh_1$ and $\bh_2$,
$$
\inf_\Om\ \frac{\sup_{u} \frac{\|u_{\bh_1}\|_\infty}{\|u_{\bh_2}\|_\infty}}{\inf_{u} \frac{\|u_{\bh_1}\|_\infty}{\|u_{\bh_2}\|_\infty}}\ = \ 1,
$$
and the infimum is attained when $\Om$ is a parallelogram with the sides parallel to $\bh_1 + \bh_2$ and $\bh_1 - \bh_2$.
\end{remark}

\medskip

\section*{Acknowledgements}

The work of AP was supported by the projects UIDB/04106/2025 and CoSysM3, 2022.03091.PTDC, through FCT.

\end{document}